\documentclass[11pt]{article}

\usepackage{amsfonts,amsmath,amsfonts,amssymb,graphicx,graphics, bbm}
\usepackage{authblk}
\usepackage{enumerate}
\usepackage{color}
\usepackage{algorithm}
\usepackage{algpseudocode}

\newtheorem{theorem}{Theorem}[section]
\newtheorem{lemma}[theorem]{Lemma}

\newtheorem{proposition}[theorem]{Proposition}
\newtheorem{corollary}[theorem]{Corollary}

\newenvironment{proof}[1][Proof]{\noindent\textbf{#1.} }{\ \rule{0.5em}{0.5em}}

\setlength{\textwidth}{15.5cm}
\setlength{\textheight}{20.5cm}
\setlength{\topmargin}{-0.5cm}
\hoffset -1.5 cm

\DeclareMathOperator{\rank}{rank}
\DeclareMathOperator*{\argmin}{arg\,min}

\def\E{\mathbb{E}}
\def\P{\mathbb{P}}

\title{Two-sided Matrix Regression}

\author[1]{Nayel Bettache} 
\author[1]{Cristina Butucea}  
\affil[1]{CREST, ENSAE, Institut Polytechnique de Paris, 
5 avenue Henry Le Chatelier, 91120 Palaiseau, France}

\begin{document}

\maketitle

\begin{abstract}%
The two-sided matrix regression model $Y = A^*X B^* +E$ aims at predicting $Y$ by taking into account both linear links between column features of $X$, via the unknown matrix $B^*$, and also among the row features of $X$, via the matrix $A^*$. We propose low-rank predictors in this high-dimensional matrix regression model via rank-penalized and nuclear norm-penalized least squares. Both criteria are non jointly convex; however, we propose explicit predictors based on SVD and show optimal prediction bounds. We give sufficient conditions for consistent rank selector. We also propose a fully data-driven rank-adaptive procedure. Simulation results confirm the good prediction and the rank-consistency results under data-driven explicit choices of the tuning parameters and the scaling parameter of the noise.
\end{abstract}

\noindent {\bf Key Words: }
Matrix regression, Multivariate response regression, Nuclear norm penalized, Oracle inequality, Rank penalized, Rank selection, Two-sided matrix regression.

\section{Introduction}

Supervised learning is often performed on large data bases. Matrix regression assumes that the data $Y$ can be well explained by a set of features given by the columns of the matrix $X$ and linear combinations of these columns. It is often the case in real-life that the rows of $Y$ can be explained by linear combinations of the rows of $X$. 

For example, economic data store economic indicators as column features and countries as rows. Such a matrix is usually explained by a smaller matrix roughly containing a smaller number of countries (representatives of groups of geographically or economically close countries) and a few economic features or some factors produced out of all these indicators. We would like to predict a larger number of indicators for a larger number of countries, {\it i.e.} $Y$ a $n\times p$ matrix, using the features $X$ a $m \times q$ matrix.\\
Recommendation systems want to predict the opinion of $n$ clients concerning $p$ items. We can use publicly available data on a number $m$ of different groups of clients and their affinity to a number $q$ of large categories of items in order to predict by evaluating the client's correlation to the prescribed groups in the population and the item's weight in its category. We may include a multiple-label situation where the items belonging to a main category are also related to other categories.\\
Other examples can be given for meteorological data, medical or pharmaceutical data and so on.

{\bf Model.} We observe the matrix $Y \in \mathbb{R}^{n \times p}$ and a design matrix $X \in \mathbb{R}^{m \times q}$ related via the {\bf two-sided matrix regression (2MR)} model involving two parameter matrices $A^* \in \mathbb{R}^{n \times m}$ and $B^* \in \mathbb{R}^{q \times p}$:
\begin{equation} \label{eq: model matrix}
    Y = A^* X B^* + E,
\end{equation}
where the noise matrix $E$ is assumed to have independent centered $\sigma-$sub-Gaussian entries. 

The 2MR model encompasses known models like, {\it e.g.} matrix regression and matrix factorisation.
Indeed, if $n=m$ and $A^*$ is the identity, the matrix model \eqref{eq: model matrix} becomes the (one-sided) {\it matrix regression} (MR) model $Y = X B^* + E$, see \cite{obozinski}, \cite{bunea}, \cite{negahban}.\\
Assume now that $m=q$ and that the design matrix $X$ is the identity matrix of rank $m$ smaller than both $n$ and $p$. Our model becomes a {\it factorisation model} of the signal $M^* = A^* B^*$ observed with noise. The idea is to recover a low-rank structure generating the observed data. In \cite{klopp} the authors have considered structured factorisation of the signal under assumptions that the rows of $A^*$ and the columns of $B^*$ have a common sparsity parameter and $X$, which they do not observe, has a much smaller dimension than $Y$.

The 2MR model \eqref{eq: model matrix} is strongly related to other models, but we argue that it cannot be reduced to these other models of a different nature. Indeed, note that the entry $Y_{ij}$ of the matrix $Y$ can be written
$$
Y_{ij} = {\rm Tr} (X \cdot B^*_{\cdot, j} A^*_{i,\cdot}) + E_{ij},
$$
for any $i$ in $[n]$, where $[n] = \{1, \ldots, n\}$, and for any $j$ in $[p]$. Thus every entry $Y_{ij}$ brings information through the same design matrix $X$ on the rank 1 matrix $B^*_{\cdot, j} A^*_{i,\cdot}$. This is unlike {\it the trace-regression model} or the more general {\it matrix completion} studied by \cite{rohde2011estimation}, \cite{koltchinskii}, where a different design matrix brings information on the parameter matrix $B^*A^*$. \\
Another way of writing model \eqref{eq: model matrix} is in the form of {\it vector regression model}, by stacking the columns of matrices $Y$, $X$ and $E$ into $vec(Y)$, $vec(X)$ and $vec(E)$, respectively, to get
\begin{equation} \label{eq: vec form model}
    vec(Y)^\top = vec(X)^\top \cdot A^\top \otimes B + vec(E)^\top,
\end{equation}
where $\otimes$ denotes the tensor product of two matrices. Under this relation, we predict a row vector of size $np$ using a row vector of size $mq$ (the matrix of features has rank 1) via a parameter of size $(mq) \times (np)$ which cannot go well unless the structure of $A$ and $B$ is trivial. This approach cannot take into account the matrix structure of the features, of the matrices $A^*$, $B^*$, and it gives poor results on that account.

This model has been introduced in time series by \cite{RongChen} as the {\it auto-regressive matrix-valued model} of order 1, MAR(1), $Y_t = A^* Y_{t-1} B^* + E_t$, observed at times $t$ in $[T]$. In this case $A^*$ and $B^*$ are squared matrices with spectral radii strictly less than 1 in order to ensure stability of the time series ($X_t$ is thus stationary and causal). The authors propose three estimation methods: first, they use the vector form analogous to \eqref{eq: vec form model}, stack the $T$ lines of $vec(Y_t)^\top$ and they use the nearest Kronecker product (NKP) problem to give estimators of $A^*$ and $B^*$ out of the global least squares estimator of ${A^*}^\top \otimes B^*$; then, their next method minimizes the least squares over $A$ and $B$
$$
\min_{A,B} \, \frac 1T \sum_{t=1}^T \|Y_t - A Y_{t-1} B\|_F^2,
$$
by a sequential procedure minimizing over $A$ for fixed given $B$, then over $B$ for fixed $A$, and iterating; finally, they give an MLE procedure over $A$ and $B$ under a particular structure of the covariance matrix of $E$ and proceed also sequentially. Theoretical results state the asymptotic normality as $T$ tends to infinity, for fixed dimensions. However, the first procedure is cumbersome as the estimated matrix is very large, while the other two procedures are based on non-convex minimization without theoretical guarantees as to the limit points of the algorithm. \\
Least squares and MLE estimators with AIC and BIC penalties have been numerically studied by \cite{hsu} of a more general time series model 
$$
Y_t = \sum_{\ell = 1}^L A_\ell Y_{t-\ell} B_\ell + E_t, \quad t=1,\ldots, T,
$$
which is treated as $Y_t = A^* X_{t} B^* + E_t$, where $X_{t}$ is the block diagonal matrix containing the $L-$past observed matrices $Y_{t-1}, \ldots, Y_{t-L}$ and $A^* = (A_1,\ldots,A_L)$ and $B^* = (B_1^\top,\ldots,B_L^\top)^\top$ are the concatenated matrices in the previous equation. \\
Thus, our paper is motivated by the need to deal with high-dimensional data and finite (non-asymptotic) time (say $T=1$) in order to provide theoretical guarantees for prediction.

{\bf Contributions. } We show in Section~\ref{sec: rank} that by using the SVD of matrices $Y = U_Y\Sigma_Y V_Y^T$ and $X = U_X \Sigma_X V_X^T$, the least squares procedure can be reduced to fitting predictors of the form $A_0 \Sigma_X B_0$ to the diagonal matrix $\Sigma_Y$ with explicit relations between $A_0,\, B_0$ and $A, B$. There is a natural choice of predictors of $A_0$ and of $B_0$ under diagonal form. We study these predictors for given ranks $r$ and that we transform back into the original space of $Y$ without loss of prediction rate. Then we give a data-dependent rank selector and show that the predictors associated to it attain optimal bounds. We give sufficient conditions so that the rank selector is consistent. Finally, we slightly modify the procedure to be free of the parameter $\sigma$ of the noise and show new upper bounds in this case.  In Section~\ref{sec: nuclear}, we study the nuclear norm penalized least squares and show it attains the optimal bounds too. All proofs are in a dedicated section in the Appendix. Finally, we illustrate in Section~\ref{sec: numerics} via numerical simulations the excellent prediction results of these fast running, explicit predictors.

{\bf Notations.} For any integers $n$ and $m$ we denote $n \wedge m$ for the minimum between $n$ and $m$ and $n \vee m$ for the maximum between $n$ and $m$. For any matrix $M $ of size ${n \times m}$ and rank $r_M$, we denote its singular value decomposition (SVD) by $M = U_M \Sigma_M V_M^\top$, where $U_M$ belongs to $\mathcal{O}_n$ - the set of orthogonal matrices of size $n\times n$, $V_M$ belongs to $\mathcal{O}_m$ and 
$$
\Sigma_M = Diag_{n,m}(\sigma_k(M), \, 1\leq k \leq {r_M}).
$$ 
Note that $\sigma_1(M), \ldots, \sigma_{r_M}(M)$ are the positive singular values of $M$ listed in decreasing order, and the $n \times m$ diagonal matrix $Diag_{n,m}(\sigma_k(M), \, 1\leq k \leq {r_M})$ has diagonal entries in the list and 0 elsewhere. 
Furthermore, denote $\|M\|_F^2 = \sum \limits_{k=1}^{n \wedge m} \sigma_k(M)^2$ its Frobenius norm, $\|M\|_{(2, q)}^2 = \sum \limits_{k=1}^{q} \sigma_k(M)^2$ its Ky-Fan $(2, q)$ norm, $\|M\|_{op} = \sigma_1(M)$ its operator norm, $\|M\|_*=\sum \limits_{k=1}^{n \wedge m} \sigma_k(M)$ its nuclear norm, $M^\dagger$ its Moore-Penrose inverse, $r_M$ its rank and $M^T$ its transpose. For any matrices $M_1$ and $M_2$ in $\mathbb{R}^{n \times m}$, $\langle M_1, M_2 \rangle_F$ denotes the canonical scalar product, {\it i.e.} $\langle M_1, M_2 \rangle_F = \text{Tr}(M_1^TM_2)$. For any $r \in [r_M]$, we denote $[M]_r$ the best $\rank r$ approximation of $M$ for the Frobenius norm.
In the model \eqref{eq: model matrix}, let us denote by $r^*$ the rank of $A^* X B^*$.

\section{Rank penalized learning} \label{sec: rank}

In this section we propose rank adaptive predictors and provide theoretical guarantees for their error. First we give explicit predictors under the assumption that the ranks of the parameter matrices are known, then a selection procedure will allow to provide a data-dependent rank selector and the associated rank-adaptive predictor. Even though we follow classical results for rank penalized (one-sided) matrix regression, e.g. \cite{bunea}, \cite{giraudEJS} and \cite{bingwegkamp}, we give details for the fixed rank two-sided matrix regression which is novel to the best of our knowledge. Surprisingly, explicit predictors can be proposed despite the identifiability issues of this model. Only after this, we proceed to rank selection and rank-adaptive learning.

\subsection{Prediction for given ranks} 
Let $r$ belong to $[n\wedge p \wedge r_X]$. 
Let us build explicit predictors $(\hat{A}_r, \hat{B}_r)$ solutions to the non-convex minimization problem 
\begin{equation} \label{opt: fixed rank minimization}
    \underset{\substack{A,B:\\ \rank A \wedge \rank B \leq r }}{\min} \| Y - A X B \|^2_F.
\end{equation}

Notice that the rank constraints on $A$ and $B$ use the same value $r$. Indeed the objective is to build a predictor for the signal $A^*XB^*$ which satisfies $\rank (A^*XB^*) \leq \min \left( r_{A^*}, r_X, r_{B^*} \right)$. In the steps of the proof of our results, we see that the upper bound of the risk depends on the ranks of $A^*$ and of $B^*$ only through their least value and no information can be recovered on the largest rank of the two. Hence it makes sense to look for $A$ and $B$ sharing the same rank as a dimension reduction technique without any impact on the final results.

The model \eqref{eq: model matrix} can be rewritten using the SVD of the observed matrix $Y$ and of the design matrix $X$ as
\begin{equation} \label{eq: diagonal model}
\Sigma_Y =     A_0^* \cdot \Sigma_X \cdot B_0^* + E_0,
\end{equation}
where $A_0^* = U_Y^T A^* U_X$,  $B_0^* = V_X^T B^* V_Y$ and $E_0:= U_Y^T \cdot E \cdot V_Y$. In the particular case where $E$ has independent entries with distribution $\mathcal{N}(0, \sigma^2)$ than so does $E_0$, see Lemma~\ref{lemma gaussian entries}. 
Now, $\Sigma_Y$ and $\Sigma_X$ are diagonal matrices, not necessarily squared, not necessarily full rank. Given the invariance of the Frobenius norm by left or right multiplication with orthogonal matrices, we get that for any matrices $A \in \mathbb{R}^{n \times m}$ and $B \in \mathbb{R}^{q \times p}$ we have
\begin{equation*} \label{eq: new param}
    \| Y - A X B \|^2_F  = \| \Sigma_Y - A_0 \Sigma_X B_0 \|^2_F,
\end{equation*}
where $A_0 = U_Y^T A U_X$ and $B_0=V_X^\top B V_Y$ are obtained via analogous transformations to those relating the true underlying parameters.

Obviously, matrices $A$ and $A_0$ have the same rank, and the same holds for $B$ and $B_0$. Therefore, solving \eqref{opt: fixed rank minimization} is equivalent to solving for $\hat{A}_{0r}$ and $\hat B_{0r}$ solutions of 
\begin{equation}\label{opt: diagonal form fixed rank}
    \underset{\substack{A_0,B_0: \\ \rank A_0 \wedge \rank B_0 \leq r }}{\min} \| \Sigma_Y - A_0 \Sigma_X B_0 \|^2_F.
\end{equation}

\begin{theorem} \label{th: fixed rank bounds diagonal}
    Let us define for $r \in [n \wedge p \wedge r_X]$    \begin{equation}\label{estimators_diag_fixed_rank}
        \hat{A_0}_{r} = Diag_{n,m}(\sigma_k(Y), \, 1\leq k \leq {r\wedge r_Y})\quad\text{and} \quad \hat{B_0}_{r} = Diag_{q,p}(\sigma_k(X)^{-1}, \, 1\leq k \leq {r}).
    \end{equation}
    Then, $(\hat{A_0}_{r}, \hat{B_0}_{r})$ belong to the set of solutions of problem \eqref{opt: diagonal form fixed rank} and the predictor $\hat{A_0}_r \Sigma_X \hat{B_0}_r$ satisfies for an absolute constant $C>0$ and for any $t>0$, the oracle inequality
    \begin{align*}
        \|A_0^* \Sigma_X B_0^* - \hat{A_0}_{r} \Sigma_X \hat{B_0}_{r}\|_F^2 &\leq 9 \underset{\substack{A_0,B_0:\\ \rank A_0 \wedge \rank B_0 \leq r }}{\inf}  \|A_0^* \Sigma_X B_0^* - A_0 \Sigma_X B_0\|_F^2 \\
        &+ 24 C \sigma^2 (1+t)^2 \cdot  r ({n}+ {p}),
    \end{align*}
with probability larger than $1-2\exp(-t^2(\sqrt{n}+ \sqrt{p})^2)$. 
    \end{theorem}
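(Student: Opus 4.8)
The plan is to reduce the fixed-rank problem to a classical low-rank approximation problem solved by Eckart--Young, and then run a standard basic-inequality (``oracle'') argument, with the one twist that the rotated noise $E_0$ must be controlled through quantities that are invariant under orthogonal conjugation.

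First I would identify the feasible set of \eqref{opt: diagonal form fixed rank}. Since $\rank(A_0\Sigma_X B_0)\le \rank A_0\wedge \rank B_0\le r$, every feasible product has rank at most $r$; conversely, because $r\le r_X$ the first $r$ diagonal entries of $\Sigma_X$ are nonzero, so given any $D\in\R^{n\times p}$ with $\rank D\le r$ and a factorisation $D=FG$, one can place $F$ in the first $r$ columns of $A_0$ and the scaled rows $\sigma_k(X)^{-1}G_{k\cdot}$ in the first $r$ rows of $B_0$ to obtain $A_0\Sigma_X B_0=D$. Hence the feasible set is exactly $\{D:\rank D\le r\}$, and by the Eckart--Young theorem the minimiser of $\|\Sigma_Y-D\|_F^2$ is $[\Sigma_Y]_r$. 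A direct multiplication of the three diagonal matrices shows $\hat A_{0r}\Sigma_X\hat B_{0r}=Diag_{n,p}(\sigma_k(Y),\,1\le k\le r\wedge r_Y)=[\Sigma_Y]_r$, which simultaneously proves that $(\hat A_{0r},\hat B_{0r})$ is a solution and identifies the oracle term as $\inf_{\rank D\le r}\|M_0^*-D\|_F^2$, where $M_0^*:=A_0^*\Sigma_X B_0^*$.

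Next I would run the basic inequality. Write $\bar M:=[M_0^*]_r$ for the oracle and $\hat M:=[\Sigma_Y]_r$ for the predictor. Since $\bar M$ is feasible and $\hat M$ is optimal, $\|\Sigma_Y-\hat M\|_F^2\le\|\Sigma_Y-\bar M\|_F^2$; substituting $\Sigma_Y=M_0^*+E_0$ and cancelling $\|E_0\|_F^2$ gives
\begin{equation*}
\|M_0^*-\hat M\|_F^2\le \|M_0^*-\bar M\|_F^2+2\langle \hat M-\bar M,E_0\rangle_F .
\end{equation*}
The matrix $\hat M-\bar M$ has rank at most $2r$, so using the variational identity $\sup\{\langle Z,E_0\rangle_F:\rank Z\le 2r,\ \|Z\|_F\le1\}=\|E_0\|_{(2,2r)}$ I would bound the cross term by $2\|\hat M-\bar M\|_F\,\|E_0\|_{(2,2r)}$, then use $\|\hat M-\bar M\|_F^2\le 2\|\hat M-M_0^*\|_F^2+2\|M_0^*-\bar M\|_F^2$ together with a weighted Young inequality. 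Absorbing a fraction of $\|\hat M-M_0^*\|_F^2$ on the left-hand side produces the factor $9$ in front of the oracle term and a noise term proportional to $\|E_0\|_{(2,2r)}^2\le 2r\|E_0\|_{op}^2$.

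Finally I would control the noise. The crucial observation is that $E_0=U_Y^\top E V_Y$ is obtained by orthogonal conjugation, so $\sigma_k(E_0)=\sigma_k(E)$ for every $k$, and in particular $\|E_0\|_{op}=\|E\|_{op}$. A standard concentration bound for a matrix with independent centred $\sigma$-sub-Gaussian entries yields $\|E\|_{op}\le C\sigma(1+t)(\sqrt n+\sqrt p)$ on an event of probability at least $1-2\exp(-t^2(\sqrt n+\sqrt p)^2)$; combined with $\|E_0\|_{(2,2r)}^2\le 2r\|E\|_{op}^2$ and $(\sqrt n+\sqrt p)^2\le 2(n+p)$ this gives the announced term $24 C\sigma^2(1+t)^2\,r(n+p)$. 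The main obstacle is precisely this last step: because $U_Y$ and $V_Y$ depend on $Y$ and hence on $E$, the matrix $E_0$ has neither independent nor sub-Gaussian entries, so one cannot bound $\|E_0\|_{(2,2r)}$ by treating its entries directly; the argument only goes through thanks to the orthogonal invariance of the singular values, and the bookkeeping of the constants $9$ and $24$ in the Young and concentration steps requires some care.
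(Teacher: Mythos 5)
Your proposal is correct and follows essentially the same route as the paper's proof: reduce to Eckart--Young via the observation that $\hat A_{0r}\Sigma_X\hat B_{0r}=[\Sigma_Y]_r$, run the basic inequality, bound the cross term through the Ky--Fan $(2,2r)$ norm and Young's inequality, and control $\|E_0\|_{op}=\|E\|_{op}$ by orthogonal invariance of singular values plus the sub-Gaussian operator-norm concentration bound. The only cosmetic difference is that you identify the feasible set of \eqref{opt: diagonal form fixed rank} as exactly the rank-$\le r$ matrices via an explicit factorisation, whereas the paper only establishes equality of the two infima by exhibiting diagonal matrices $A_{0r}, B_{0r}$ attaining the projection; your closing remark that the argument survives the dependence of $U_Y, V_Y$ on $E$ only through singular-value invariance is exactly the point the paper relies on.
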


Next, from the explicit solutions $(\hat A_{0r}, \hat B_{0r})$ of \eqref{opt: diagonal form fixed rank} we deduce explicit solutions of \eqref{opt: fixed rank minimization}.

\begin{corollary} \label{cor: upper bound original problem}
Let us define for $r \in [n \wedge p \wedge r_X]$
\begin{equation}\label{estimators_fixed rank}
\hat{A}_r = U_Y \hat{A}_{0r} U_X^T \quad \text{and} \quad \hat{B}_r = V_X \hat{B}_{0r} V_Y^T,    
\end{equation} 
with $\hat A_{0r}$ and $\hat B_{0r}$ defined in \eqref{estimators_diag_fixed_rank}.
    Then $(\hat{A}_{r}, \hat{B}_{r})$ are solution to the problem \eqref{opt: fixed rank minimization} and the predictor $\hat A_r X \hat B_r$ satisfies for an absolute constant $C>0$ and for any $t>0$, the oracle inequality
    \begin{align*}
        \|A^* X B^* - \hat{A}_{r} X \hat{B}_{r}\|_F^2 &\leq 9 \underset{\substack{A,B:\\ \rank A \wedge \rank B \leq r }}{\inf}  \|A^* X B^* - A X B\|_F^2  + 24 C \sigma^2 (1+t)^2 \cdot  r ({n}+ {p}),
    \end{align*}
with probability larger than $1-2\exp({-t^2(\sqrt{n}+ \sqrt{p})^2})$. 
\end{corollary}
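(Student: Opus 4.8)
The plan is to transfer the oracle inequality of Theorem~\ref{th: fixed rank bounds diagonal}, established for the diagonal problem \eqref{opt: diagonal form fixed rank}, back to the original problem \eqref{opt: fixed rank minimization} by exploiting the orthogonal invariance of the Frobenius norm. The crucial observation is that the maps $A \mapsto A_0 = U_Y^T A U_X$ and $B \mapsto B_0 = V_X^T B V_Y$ are rank-preserving bijections, with inverses $A_0 \mapsto U_Y A_0 U_X^T$ and $B_0 \mapsto V_X B_0 V_Y^T$, and that the definitions in \eqref{estimators_fixed rank} are precisely these inverse maps applied to the minimizers $(\hat A_{0r}, \hat B_{0r})$ of the diagonal problem. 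So the corollary should follow by a clean change of variables rather than a fresh argument.

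First I would verify that $(\hat A_r, \hat B_r)$ solves \eqref{opt: fixed rank minimization}. Since rank is invariant under left or right multiplication by orthogonal matrices, the constraint sets of \eqref{opt: fixed rank minimization} and \eqref{opt: diagonal form fixed rank} are in bijection under the above maps, and the identity $\|Y - AXB\|_F^2 = \|\Sigma_Y - A_0 \Sigma_X B_0\|_F^2$ recalled before the theorem shows the two objectives agree along this bijection. As $(\hat A_{0r}, \hat B_{0r})$ minimizes the diagonal objective by Theorem~\ref{th: fixed rank bounds diagonal}, its image $(\hat A_r, \hat B_r)$ minimizes the original one.

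The heart of the argument is to show the two prediction errors coincide. Writing $X = U_X \Sigma_X V_X^T$ and substituting $A^* = U_Y A_0^* U_X^T$ and $B^* = V_X B_0^* V_Y^T$, the orthogonality relations $U_X^T U_X = I$ and $V_X^T V_X = I$ collapse the design to $\Sigma_X$, since $U_X^T X V_X = \Sigma_X$; this gives $A^* X B^* = U_Y A_0^* \Sigma_X B_0^* V_Y^T$, and likewise $\hat A_r X \hat B_r = U_Y \hat A_{0r} \Sigma_X \hat B_{0r} V_Y^T$. Subtracting and applying the orthogonal invariance of $\|\cdot\|_F$ yields
\[
\|A^* X B^* - \hat A_r X \hat B_r\|_F^2 = \|A_0^* \Sigma_X B_0^* - \hat A_{0r} \Sigma_X \hat B_{0r}\|_F^2 .
\]
The same computation applied to an arbitrary admissible pair $(A,B)$ shows that the infimum in the original problem equals the infimum in the diagonal problem. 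Plugging both identities into the conclusion of Theorem~\ref{th: fixed rank bounds diagonal} produces the stated oracle inequality, with the probability bound carried over unchanged since the underlying event is the same.

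I do not expect a genuine obstacle here: the content of the corollary is entirely the change of variables, so the only care required is bookkeeping — tracking the placement of transposes in the orthogonal conjugations and confirming that it is the design matrix $X$, and not merely the residual, that transforms correctly into $\Sigma_X$. Once the identity $U_X^T X V_X = \Sigma_X$ is in hand, everything reduces to a direct substitution into the previously proven bound.
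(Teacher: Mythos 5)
Your proposal is correct and follows essentially the same route as the paper: both arguments rest on the observation that $(A,B)\mapsto(U_Y^T A U_X, V_X^T B V_Y)$ is a rank-preserving bijection under which the Frobenius objectives, the prediction errors, and the infima all coincide, so the bound of Theorem~\ref{th: fixed rank bounds diagonal} transfers directly. The key identity $A^*XB^* = U_Y A_0^*\Sigma_X B_0^* V_Y^T$ you verify is exactly the mechanism the paper invokes, and the probability statement carries over because $\|E_0\|_{op}=\|E\|_{op}$.
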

The proofs of Theorem~\ref{th: fixed rank bounds diagonal} and of Corollary~\ref{cor: upper bound original problem} can be found in Section~\ref{sec:proofs}. In the proofs we explicit the bias in terms of the unknown matrix parameters:
$$
\underset{\substack{A,B:\\ \rank A \wedge \rank B \leq r }}{\inf}  \|A^* X B^* - A X B\|_F^2 = \sum_{k=r+1}^{r^*} \sigma_k(A^* X B^*)^2 \cdot \mathbf{1}_{r < r^*}.
$$

\bigskip

Note that our choice for the couple of predictors $(\hat{A_0}_{r}, \hat{B_0}_{r})$ is not unique and we can easily derive families of solutions to the problem \eqref{opt: diagonal form fixed rank}. Each family of solutions can be turned into a solution to the problem \eqref{opt: fixed rank minimization}. Indeed, consider $ (\alpha \hat{A_0}_{r}, \dfrac{1}{\alpha} \hat{B_0}_{r})$ with arbitrary $\alpha >0$. Alternatively, let $\lambda_i $ for all $i \leq m \wedge q$ be arbitrary positive numbers, then 
$$(\hat{A_0}_r Diag_{m,m}(\lambda_1,\ldots, \lambda_{m \wedge q}), Diag_{q,q}(\lambda_1^{-1}, \ldots,\lambda_{m\wedge q}^{-1})\hat{B_0}_r) 
$$
give the same prediction. 
Let us see that the same transformations applied to the parameter matrices $A_0^*$ and $B_0^*$ also lead to the same signal matrix $A_0^* \Sigma_X B_0^*$. Indeed, the model is non-identifiable and so, without further strong assumptions, we can only hope to learn the global signal, and not the parameters of the model.

{\bf Alternative predictors.} Let us define a second couple of predictors $(\tilde A, \tilde B_r)$ producing exactly the same prediction as $(\hat A_r, \hat B_r)$ with the same theoretical properties, but having the advantage that $\tilde A$ is full rank and does not depend on $r$. Define
\begin{align*}
    \Tilde{A_0} = I_{n,m} & \quad  \text{and} \quad \Tilde{B_0}_r = Diag_{q,p}\left( \frac{\sigma_k(Y)}{\sigma_k(X)}, \, 1\leq k \leq {r\wedge r_Y}  \right)
\end{align*}
where $I_{n,m}$ denotes the identity matrix of dimension $n\times m$, whereas $\Tilde{B_0}_r$ has rank $r\wedge r_Y$. Using the analogous transformations we obtain
\begin{align*}
\Tilde{A} = U_Y I_{n,m} U_X^T & \quad \text{and} \quad \Tilde{B}_r = V_X \Tilde{B_0}_r V_Y^T.   
\end{align*} 
It is easy to see that Theorem~\ref{th: fixed rank bounds diagonal} is valid for $\Tilde{A_0}$ and $\Tilde{B_0}_r$, and that Corollary~\ref{cor: upper bound original problem} is valid for $\Tilde{A}$ and $\Tilde{B}_r$. 

\subsection{Rank-adaptive prediction}

In this section, we propose rank-adaptive predictors $(\hat A_{\hat r}, \hat B_{\hat r})$ which are selected from the family $\{(\hat A_r, \hat B_r): r \in [n \wedge p \wedge r_X]\}$ by a model selection procedure analogous to that of \cite{bunea}. 
Let us first define, for a generic matrix $M$ and any $\lambda >0$, the $\lambda-$rank of $M$ as
$$
r_M(\lambda) = 1 \vee \sum_{k=1}^{\rank M} \mathbf{1}_{\sigma_k(M)^2 \geq \lambda}.
$$

For given $ \lambda >0$, let 
\begin{equation} \label{opt: rank search minimization}
\hat r := \arg\min_{r \in [n \wedge p \wedge r_X]} \left\{ \|Y - \hat A_r X \hat B_r\|_F^2 + \lambda r \right\}.
\end{equation}
Consider the predictors introduced in \eqref{estimators_fixed rank} for the data-driven rank $\hat r$ as defined in \eqref{opt: rank search minimization}. The next Theorem extends the oracle inequality to the rank-adaptive predictors  $(\hat A_{\hat r}, \hat B_{\hat r})$ associated to the estimated rank $\hat r$ and to some $\lambda>0$ large enough.
\begin{theorem} \label{th: rank adaptive predictor upper bound}
The rank-adaptive predictors $(\hat A_{\hat r}, \hat B_{\hat r})$ associated to $\hat r$ in \eqref{opt: rank search minimization} and to $\lambda$ such that, for some absolute constant $C>0$ and for any $t>0$, $\lambda \geq  4 C (1+t)^2 \sigma^2 ({n}+ {p})$, satisfy the oracle inequality
    \begin{equation*}
        \|A^* X B^* - \hat A_{\hat r} X \hat B_{\hat r}\|_F^2 \leq \underset{r \in [n \wedge p \wedge r_X]}{\min} \left\{  9 \sum \limits_{k=r+1}^{r^*} \sigma_k(A^* X B^*)^2 \cdot \mathbf{1}_{r<r^*} + 6 \lambda r \right\},
    \end{equation*}
with probability larger than $1-2\exp({-t^2(\sqrt{n}+ \sqrt{p})^2})$.
\end{theorem}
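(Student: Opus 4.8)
The plan is to first make the selected family fully explicit. A direct computation from the definitions in \eqref{estimators_diag_fixed_rank}--\eqref{estimators_fixed rank}, using $U_X^\top X V_X=\Sigma_X$, gives
\[
\hat A_r X \hat B_r = U_Y\,\hat A_{0r}\Sigma_X\hat B_{0r}\,V_Y^\top = U_Y\,[\Sigma_Y]_r\,V_Y^\top = [Y]_r ,
\]
so that the predictors indexed by $r$ are exactly the SVD truncations of $Y$ and $\|Y-\hat A_r X\hat B_r\|_F^2=\sum_{k>r}\sigma_k(Y)^2$. Writing $M^*:=A^*XB^*$ and using that $[Y]_r$ minimises $\|Y-M\|_F^2$ over $\rank M\le r$ (Eckart--Young), the definition of $\hat r$ in \eqref{opt: rank search minimization} yields, for every $r\in[n\wedge p\wedge r_X]$ and every $M$ with $\rank M\le r$, the master inequality
\[
\|Y-[Y]_{\hat r}\|_F^2+\lambda\hat r\le \|Y-[Y]_r\|_F^2+\lambda r\le \|Y-M\|_F^2+\lambda r .
\]

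Next I would substitute $Y=M^*+E$ at both ends, cancel the common term $\|E\|_F^2$, and specialise to $M=[M^*]_r$, for which $\|M^*-M\|_F^2=\sum_{k=r+1}^{r^*}\sigma_k(M^*)^2\,\mathbf 1_{r<r^*}$ is precisely the bias identity stated after Corollary~\ref{cor: upper bound original problem}. After rearranging, this gives
\[
\|M^*-[Y]_{\hat r}\|_F^2\le \|M^*-[M^*]_r\|_F^2+2\langle E,\,[Y]_{\hat r}-[M^*]_r\rangle_F+\lambda(r-\hat r).
\]

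The crux is the control of the stochastic term. Since $[Y]_{\hat r}-[M^*]_r$ has rank at most $\hat r+r$, trace duality gives $\langle E,[Y]_{\hat r}-[M^*]_r\rangle_F\le \|E\|_{op}\sqrt{\hat r+r}\,\|[Y]_{\hat r}-[M^*]_r\|_F$, and the triangle inequality bounds the last factor by $\|M^*-[Y]_{\hat r}\|_F+\|M^*-[M^*]_r\|_F$. I would then invoke the same sub-Gaussian deviation bound for $\|E\|_{op}$ that underlies Theorem~\ref{th: fixed rank bounds diagonal}, namely $\|E\|_{op}^2\le C\sigma^2(1+t)^2(n+p)$ on an event $\Omega$ of probability at least $1-2\exp(-t^2(\sqrt n+\sqrt p)^2)$. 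On $\Omega$ the hypothesis $\lambda\ge 4C(1+t)^2\sigma^2(n+p)$ becomes $4\|E\|_{op}^2\le\lambda$, which is exactly the slack needed in the final step.

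The main obstacle is the bookkeeping in the ensuing Young's inequalities. Writing the stochastic bound on $\Omega$ as $\sqrt\lambda\,\sqrt{\hat r+r}\,\bigl(\|M^*-[Y]_{\hat r}\|_F+\|M^*-[M^*]_r\|_F\bigr)$, one must split each product with weights chosen so that: (i) the total coefficient of $\lambda\hat r$ thereby created does not exceed the $\lambda\hat r$ supplied by the penalty, so that the $\hat r$-dependence cancels; and (ii) the coefficient of $\|M^*-[Y]_{\hat r}\|_F^2$ stays strictly below one, so that this term can be moved to the left-hand side. With weights matched to the constants of Corollary~\ref{cor: upper bound original problem}, this produces $\|M^*-[Y]_{\hat r}\|_F^2\le 9\|M^*-[M^*]_r\|_F^2+6\lambda r$ on $\Omega$; taking the minimum over $r$ gives the announced oracle inequality. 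No union bound over $r$ is needed, since the only random object entering the estimates is the single scalar $\|E\|_{op}$, whence the probability statement is inherited verbatim from $\Omega$.
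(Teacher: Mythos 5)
Your proposal is correct and follows essentially the same route as the paper's proof: the argmin inequality for $\hat r$ combined with the Eckart--Young optimality of $[Y]_r$, trace duality with the rank bound $\hat r+r$ on the stochastic term, two applications of Young's inequality with weights $\alpha=3/2$, $\beta=1/2$, and cancellation of the $\hat r$-terms under $2\|E\|_{op}^2\le\lambda$ on the single concentration event. The only cosmetic differences are that you specialise directly to $M=[M^*]_r$ rather than to competitors of the form $AXB$ (equivalent here by the bias identity), and your stated concentration bound differs from the paper's display \eqref{ineq: concentration} by a factor of $2$, which only gives you extra slack.
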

Note that the minimum on the right-hand side of the previous display is always smaller than the value at $r=r^*$, giving under the assumptions of Theorem~\ref{th: rank adaptive predictor upper bound} that
$$
\|A^* X B^* - \hat A_{\hat r} X \hat B_{\hat r}\|_F^2 \leq  6r^*\lambda,
$$ 
with probability larger than $1-2\exp({-t^2(\sqrt{n}+ \sqrt{p})^2})$.\\
The bounds of order $r^*(n+p)$ attained by our procedure are analogous to those for the low-rank matrix regression models in \cite{rohde2011estimation} and \cite{giraudEJS}. Indeed, the 2MR model is more difficult than the MR model, ({\it i.e.} one of the matrices is known) and we will suppose known the matrix with larger rank in order to achieve the correct lower bounds. Thus the lower bounds for prediction in the low-rank MR model will be valid for our model.

\subsection{Consistent rank selection}\label{sec: rank selection}

We study the consistency of the rank selector $\hat r$ in \eqref{opt: rank search minimization} and see when it recovers the true rank $r^*$ with high probability. First, we show that, for properly chosen $\lambda$, the data-driven rank $\hat r$ is actually the unique solution and coincides with the $\lambda-$rank of $Y$, $\hat r = r_{Y}(\lambda)$. 

\begin{proposition}\label{prop: hat r 1}
If $\lambda > \sigma_{r_Y}(Y)^2$, there is a unique solution $\hat r$ to the optimisation problem in \eqref{opt: rank search minimization} and it is actually the $\lambda-$rank of $Y$, i.e. $\hat r = r_Y(\lambda)$.
\end{proposition}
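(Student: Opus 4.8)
The plan is to collapse the criterion \eqref{opt: rank search minimization} into an explicit scalar function of $r$ and then to analyse that function through its first differences. First I would evaluate the residual in closed form. By the orthogonal invariance used to pass from \eqref{opt: fixed rank minimization} to \eqref{opt: diagonal form fixed rank}, together with the explicit diagonal predictors \eqref{estimators_diag_fixed_rank}, the product $\hat A_{0r}\Sigma_X\hat B_{0r}$ is the diagonal matrix $Diag_{n,p}(\sigma_k(Y),\,1\le k\le r\wedge r_Y)$: since $r\le r_X$, all three diagonal factors overlap exactly on the indices $k\le r\wedge r_Y$, where they multiply to $\sigma_k(Y)\sigma_k(X)\sigma_k(X)^{-1}=\sigma_k(Y)$. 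Consequently
\begin{equation*}
\|Y-\hat A_r X\hat B_r\|_F^2=\|\Sigma_Y-\hat A_{0r}\Sigma_X\hat B_{0r}\|_F^2=\sum_{k=(r\wedge r_Y)+1}^{r_Y}\sigma_k(Y)^2,
\end{equation*}
so the objective becomes $F(r)=\sum_{k=(r\wedge r_Y)+1}^{r_Y}\sigma_k(Y)^2+\lambda r$, which equals $\sum_{k=r+1}^{r_Y}\sigma_k(Y)^2+\lambda r$ for $r\le r_Y$ and $\lambda r$ for $r\ge r_Y$.

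Second, I would show that $F$ is discretely convex. With the convention $\sigma_k(Y)=0$ for $k>r_Y$, its first difference is $F(r+1)-F(r)=\lambda-\sigma_{r+1}(Y)^2$ for every $r\ge1$. Since the singular values are non-increasing, $r\mapsto\lambda-\sigma_{r+1}(Y)^2$ is non-decreasing, so $F$ decreases and then increases, and its minimisers are exactly the indices $r\ge1$ at which this difference changes sign, i.e. those with $\sigma_{r+1}(Y)^2\le\lambda\le\sigma_r(Y)^2$.

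Third, I would match this changepoint with $r_Y(\lambda)$ and use the hypothesis to control the boundaries. The assumption $\lambda>\sigma_{r_Y}(Y)^2$ makes the last admissible difference $\lambda-\sigma_{r_Y}(Y)^2$ strictly positive, so that $F(r_Y-1)<F(r_Y)$ and the whole regime $r\ge r_Y$, on which $F(r)=\lambda r$ is increasing, is ruled out; the minimiser thus lies in $\{1,\dots,r_Y-1\}$. Counting the strictly positive first differences identifies it with $\sum_{k=1}^{r_Y}\mathbf{1}_{\sigma_k(Y)^2\ge\lambda}$ when this count is positive, and with $1$ when $\sigma_1(Y)^2<\lambda$ (the case absorbed by the $1\vee$ in the definition of $r_Y(\lambda)$); in either case the minimiser equals $r_Y(\lambda)$.

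The step I expect to be delicate is uniqueness, which is genuinely stronger than exhibiting $r_Y(\lambda)$ as \emph{a} minimiser. A tie $F(\hat r-1)=F(\hat r)$ arises exactly when a first difference vanishes, that is when $\lambda=\sigma_k(Y)^2$ for some $k$. The hypothesis $\lambda>\sigma_{r_Y}(Y)^2$ eliminates this degeneracy at the right boundary, forcing $F(r_Y-1)<F(r_Y)$ and pushing the minimiser into the interior, while the $1\vee$ clause handles the left boundary, where $\sigma_1(Y)^2<\lambda$ makes every difference strictly positive. The remaining requirement is that the first difference be strictly signed on both sides of the changepoint, i.e. that $\lambda$ avoid the finite set of squared singular values; deducing this cleanly from the stated hypothesis, so that the $\arg\min$ reduces to the single index $r_Y(\lambda)$, is the point of the argument I would treat most carefully.
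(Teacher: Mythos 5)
Your proposal is correct and follows essentially the same route as the paper: reduce the criterion to the explicit scalar function $F(r)=\sum_{k=r+1}^{r_Y}\sigma_k(Y)^2\cdot\mathbf{1}_{r<r_Y}+\lambda r$ and locate its minimiser through the first differences $F(r)-F(r-1)=\lambda-\sigma_r(Y)^2$; the paper writes the same quantity as $\sum_{k=r+1}^{r_Y}(\sigma_k(Y)^2-\lambda)\cdot\mathbf{1}_{r<r_Y}+\lambda r_Y$ and then simply asserts that the minimum is unique and equal to $r_Y(\lambda)$. The delicate point you flag at the end is real, but it is not a gap in your argument so much as a degeneracy the paper itself glosses over: if $\lambda=\sigma_k(Y)^2$ for some $k<r_Y$ at the changepoint (e.g.\ $\sigma_1(Y)^2=10$, $\sigma_2(Y)^2=5$, $\sigma_3(Y)^2=1$, $\lambda=5$ gives $F(1)=F(2)=11$), then $r_Y(\lambda)$ and $r_Y(\lambda)-1$ are both minimisers and strict uniqueness fails, even though the hypothesis $\lambda>\sigma_{r_Y}(Y)^2$ holds. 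The hypothesis cannot deliver strict uniqueness in that case; what is true without further assumptions is that $r_Y(\lambda)$ is always a minimiser and is the unique one whenever $\lambda$ avoids the finite set $\{\sigma_k(Y)^2: k\le r_Y\}$, an event of probability one for noise with a continuous distribution. You should state this explicitly rather than leave it as an open worry; with that caveat your proof is complete and in fact more careful than the paper's.
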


Next, we prove that $\hat r$ recovers with high probability the $\lambda-$rank of $A^*XB^*$.
\begin{proposition} \label{prop: hat r 2}
    Let $\lambda >0$ and denote by $r^*(\lambda)$ the $\lambda-$rank of $A^* X B^*$ . If for some constant $c$ in (0,1),
    $\sigma_{r^*(\lambda)}(A^* X B^*)^2 > (1+c)^2 \lambda$ and $\sigma_{r^*(\lambda)+1}(A^* X B^*)^2 < (1-c)^2 \lambda$, then 
    $$
    \mathbb{P} (\hat r = r^*(\lambda)) \geq \mathbb{P} ( \|E\|_{op}^2 \leq c^2 \lambda).
    $$
    In particular, if $\lambda \geq 2C ({n} + {p}) \sigma^2 (1+t)^2/c^2$ for some absolute constant $C>0$ and for any $t>0$, then
    $\hat r = r^*(\lambda)$ with probability larger than $1-2\exp(-t^2 (\sqrt{n} + \sqrt{p})^2)$.
\end{proposition}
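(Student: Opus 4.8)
The plan is to reduce the selection criterion in \eqref{opt: rank search minimization} to a one-dimensional problem in the singular values of $Y$, and then to control those singular values by a perturbation argument on the good event $\{\|E\|_{op}^2 \le c^2\lambda\}$.

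First I would make the objective explicit. Using the diagonal reduction \eqref{eq: diagonal model} together with the explicit predictors \eqref{estimators_fixed rank}, a direct computation of the product $\hat A_{0r}\Sigma_X \hat B_{0r}$ (all three factors being rectangular diagonal, and using that $r\le r_X$) shows it equals $Diag_{n,p}(\sigma_k(Y),\,1\le k\le r\wedge r_Y)$, so that by orthogonal invariance of the Frobenius norm
\[
\|Y-\hat A_r X\hat B_r\|_F^2 = \|\Sigma_Y-\hat A_{0r}\Sigma_X\hat B_{0r}\|_F^2 = \sum_{k>r}\sigma_k(Y)^2 .
\]
Hence the criterion is $f(r)=\sum_{k>r}\sigma_k(Y)^2+\lambda r$, whose increments are $f(r)-f(r-1)=\lambda-\sigma_r(Y)^2$. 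Thus $f$ strictly decreases at step $r$ when $\sigma_r(Y)^2>\lambda$ and strictly increases when $\sigma_r(Y)^2<\lambda$; this is the mechanism already isolated in Proposition~\ref{prop: hat r 1}, and it means $\hat r$ is exactly the number of indices $k$ with $\sigma_k(Y)^2>\lambda$ whenever no singular value of $Y$ equals $\sqrt\lambda$.

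Second, I would work on $\{\|E\|_{op}\le c\sqrt\lambda\}$ and set $M=A^*XB^*$. Weyl's perturbation inequality for singular values gives $|\sigma_k(Y)-\sigma_k(M)|\le\|E\|_{op}\le c\sqrt\lambda$ for every $k$. Writing $r^*(\lambda)$ for the $\lambda$-rank of $M$, the hypothesis $\sigma_{r^*(\lambda)}(M)^2>(1+c)^2\lambda$ yields, for all $k\le r^*(\lambda)$, $\sigma_k(Y)\ge\sigma_{r^*(\lambda)}(M)-c\sqrt\lambda>\sqrt\lambda$, while $\sigma_{r^*(\lambda)+1}(M)^2<(1-c)^2\lambda$ yields, for all $k>r^*(\lambda)$, $\sigma_k(Y)\le\sigma_{r^*(\lambda)+1}(M)+c\sqrt\lambda<\sqrt\lambda$. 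The first of these inequalities forces the raw count defining the $\lambda$-rank to be at least one, so the $1\vee(\cdot)$ in the definition is inactive and $r^*(\lambda)\ge1$. Feeding this into the increment analysis, $f$ strictly decreases up to $r^*(\lambda)$ and strictly increases afterwards, so its unique minimizer (which lies in the admissible range $[n\wedge p\wedge r_X]$) is $\hat r=r^*(\lambda)$. This gives the inclusion $\{\|E\|_{op}^2\le c^2\lambda\}\subseteq\{\hat r=r^*(\lambda)\}$, hence $\mathbb{P}(\hat r=r^*(\lambda))\ge\mathbb{P}(\|E\|_{op}^2\le c^2\lambda)$.

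Finally, for the quantitative claim I would invoke the sub-Gaussian operator-norm concentration bound used throughout the paper, namely $\|E\|_{op}\le\sqrt C\,\sigma(1+t)(\sqrt n+\sqrt p)$ with probability at least $1-2\exp(-t^2(\sqrt n+\sqrt p)^2)$. Since $(\sqrt n+\sqrt p)^2\le 2(n+p)$, the choice $\lambda\ge 2C(n+p)\sigma^2(1+t)^2/c^2$ guarantees $\|E\|_{op}^2\le c^2\lambda$ on this event, and the stated high-probability recovery follows. I expect the only delicate point to be the first step: correctly evaluating the rectangular diagonal product so as to identify the criterion with $f(r)=\sum_{k>r}\sigma_k(Y)^2+\lambda r$ and thereby decouple the problem across singular values; once that reduction is established, the Weyl perturbation and concentration steps are routine.
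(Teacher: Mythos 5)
Your proposal is correct and follows essentially the same route as the paper: both characterize $\hat r$ through the condition $\sigma_k(Y)^2 > \lambda$ (you via the increment analysis of $f$, the paper via its Proposition~\ref{prop: hat r 1}), then apply Weyl's inequality on the event $\{\|E\|_{op}^2 \le c^2\lambda\}$ to separate the singular values of $Y$ above and below $\sqrt\lambda$ at index $r^*(\lambda)$, and conclude with the concentration bound \eqref{ineq: concentration}. The only cosmetic difference is that the paper argues by the contrapositive inclusion $\{\hat r \neq r^*(\lambda)\} \subseteq \{\|E\|_{op} > c\sqrt\lambda\}$, which is logically equivalent to your direct event inclusion.
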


Finally, remember that the fact that $r^*(\lambda)$ coincides with the true underlying rank $r^*$ is equivalent to having $\sigma_{r^*}(A^* X B^*)^2 \geq \lambda >0$. The rank selector will then coincide with $r^*$ if $\lambda$ also satisfies $\sigma_1(E)^2 \leq c^2 \lambda$, for some absolute constant $c>0$. It is therefore necessary that a signal-to-noise ratio, given here by $\sigma_{r^*}(A^* X B^*)^2 / \sigma_1(E)^2$ be significant in order to have the true underlying rank selected by $\hat r$.
By combining this with the previous Propositions we get the following.
\begin{proposition}\label{prop: hat r 3}
    Let $\lambda >0$. If for some constant $c$ in (0,1), $\sigma_{r^*} (A^* X B^*)^2 > (1+c)^2 \lambda$, then 
    $$
    \mathbb{P}(\hat r = r^*) \geq  \mathbb{P} ( \|E\|_{op}^2 \leq c^2 \lambda).
    $$
    In particular, if $\lambda \geq 2 C ({n} + {p}) \sigma^2 (1+t)^2/c^2$ for some absolute constant $C>0$ and for any $t>0$, then
    $\hat r = r^*$ with probability larger than $1-2\exp(-t^2 (\sqrt{n} + \sqrt{p})^2)$.
\end{proposition}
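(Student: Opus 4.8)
The plan is to deduce Proposition~\ref{prop: hat r 3} directly from Proposition~\ref{prop: hat r 2}, by showing that the single signal-to-noise assumption $\sigma_{r^*}(A^* X B^*)^2 > (1+c)^2\lambda$ forces the $\lambda$-rank of $A^* X B^*$ to coincide with the true rank $r^*$, and that it simultaneously makes both hypotheses of Proposition~\ref{prop: hat r 2} hold. Once this is established, the conclusion is just a substitution $r^*(\lambda) = r^*$ into the bound already proved there.

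First I would argue that $r^*(\lambda) = r^*$. Since $c \in (0,1)$ we have $(1+c)^2 > 1$, so the hypothesis gives $\sigma_{r^*}(A^* X B^*)^2 > (1+c)^2\lambda \geq \lambda$. Because the singular values are ordered decreasingly, $\sigma_k(A^* X B^*)^2 \geq \sigma_{r^*}(A^* X B^*)^2 \geq \lambda$ for every $k \leq r^*$, while $\sigma_k(A^* X B^*) = 0$ for $k > r^*$. Hence the counting sum defining the $\lambda$-rank equals $r^*$, and as $r^* \geq 1$ the clause $1 \vee (\cdots)$ is not binding, so $r^*(\lambda) = r^*$. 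Next I would verify the two hypotheses of Proposition~\ref{prop: hat r 2} with $r^*(\lambda)$ replaced by $r^*$. The first, $\sigma_{r^*(\lambda)}(A^* X B^*)^2 > (1+c)^2\lambda$, is exactly the assumption of Proposition~\ref{prop: hat r 3}. The second, $\sigma_{r^*(\lambda)+1}(A^* X B^*)^2 < (1-c)^2\lambda$, reads $\sigma_{r^*+1}(A^* X B^*)^2 < (1-c)^2\lambda$; but $\sigma_{r^*+1}(A^* X B^*) = 0$ since the matrix has rank $r^*$, so this holds trivially because $(1-c)^2\lambda > 0$. Applying Proposition~\ref{prop: hat r 2} then yields $\mathbb{P}(\hat r = r^*(\lambda)) \geq \mathbb{P}(\|E\|_{op}^2 \leq c^2\lambda)$, and substituting $r^*(\lambda) = r^*$ gives the first claim.

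For the ``in particular'' clause I would reuse the operator-norm concentration bound for the sub-Gaussian noise $E$ that underlies the analogous statement in Proposition~\ref{prop: hat r 2}. Using $(\sqrt n + \sqrt p)^2 \leq 2(n+p)$, the choice $\lambda \geq 2C(n+p)\sigma^2(1+t)^2/c^2$ gives $c^2\lambda \geq 2C(n+p)\sigma^2(1+t)^2 \geq C\sigma^2(1+t)^2(\sqrt n + \sqrt p)^2$, so the event $\{\|E\|_{op}^2 \leq c^2\lambda\}$ contains the high-probability event on which $\|E\|_{op}^2 \leq C\sigma^2(1+t)^2(\sqrt n + \sqrt p)^2$. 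The stated bound $\mathbb{P}(\hat r = r^*) \geq 1 - 2\exp(-t^2(\sqrt n + \sqrt p)^2)$ then follows by combining this with the first part.

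I do not expect a genuine obstacle here: the statement is a clean reduction to Proposition~\ref{prop: hat r 2}, and the only point requiring care is recognizing that the second hypothesis of that proposition degenerates to a trivial inequality once $r^*(\lambda) = r^*$, so that the lone signal strength condition $\sigma_{r^*}(A^* X B^*)^2 > (1+c)^2\lambda$ of Proposition~\ref{prop: hat r 3} is enough to drive the whole argument.
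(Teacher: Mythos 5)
Your proof is correct and follows essentially the same route as the paper, which states Proposition~\ref{prop: hat r 3} as a direct combination of the preceding discussion with Proposition~\ref{prop: hat r 2}: under $\sigma_{r^*}(A^*XB^*)^2>(1+c)^2\lambda$ one has $r^*(\lambda)=r^*$, the second hypothesis of Proposition~\ref{prop: hat r 2} holds vacuously since $\sigma_{r^*+1}(A^*XB^*)=0$, and the concentration inequality \eqref{ineq: concentration} gives the high-probability statement. Your write-up simply makes this reduction explicit; the only cosmetic quibble is that the detour through $(\sqrt n+\sqrt p)^2\leq 2(n+p)$ is unnecessary, since $c^2\lambda\geq 2C(n+p)\sigma^2(1+t)^2$ already matches the event in \eqref{ineq: concentration} directly.
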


\subsection{Data-driven rank-adaptive prediction}

The rank selector $\hat r$ in \eqref{opt: rank search minimization} is used for building consistent predictors as detailed in Theorem~\ref{th: rank adaptive predictor upper bound} provided that the condition $\lambda \geq  4 C (1+t)^2 \sigma^2 ({n}+ {p})$ is satisfied. However the noise parameter $\sigma$ is not known in general settings. Thus a data dependent rank selector is needed for building consistent predictors in those cases. Motivated by the previous case where $\sigma^2$ was supposed known, we proceed as follows. First, we change the penalty to $\lambda \cdot r \widehat{\sigma}_r^2 $ with  
$$
\widehat{\sigma}_r^2 = \frac 1{np} \|Y - \hat A_r X \hat B_r\|_F^2 .
$$
Note that in the particular case of Gaussian noise $\widehat{\sigma}_r^2$ estimates the variance $\sigma^2$ of the noise. 
Next, given a largest possible value for the true rank $r_{max} \leq n \wedge p \wedge r_X$, we define the data-driven rank selector
\begin{equation} \label{opt: rank search minimization unknown variance}
\bar r := \arg\min_{r \in [r_{max}]} \left\{ \|Y - \hat A_r X \hat B_r\|_F^2 + \lambda \cdot r \widehat{\sigma}_r^2  \right\}.
\end{equation}
Finally, we use the predictors $(\hat A_{\bar r}, \hat B_{\bar r})$. 
The next theorem extends the upper bounds of Theorem~\ref{th: rank adaptive predictor upper bound} to these data-driven rank-adaptive predictors. 

\begin{theorem} \label{th: rank adaptive predictor upper bound unknown variance}
The data-driven rank-adaptive predictors $(\hat A_{\bar r}, \hat B_{\bar r})$ associated to $\bar r$ in \eqref{opt: rank search minimization unknown variance} with $r_{max} \leq n \wedge p \wedge r_X$, and to $\lambda = (1+\varepsilon) np /(r_{max} \vee r_Y)$ for some $\varepsilon >0$, satisfy for some absolute constant $C>0$ and for any $t>0$ the oracle inequality
    \begin{align*}
        \|A^* X B^* - \hat A_{\bar r} X \hat B_{\bar r}\|_F^2 \leq \underset{r \in [r_{max}]}{\min} &\left\{ 9 \|A^*X B^* - \hat A_r X \hat B_r\|_F^2 + 6(1+\varepsilon) \cdot r  \sigma_{r+1}(A^* X B^*)^2 \right\} \\
        & + 12C(2+\varepsilon)(1 + t)^2 \cdot  \sigma^2 r_{max} (n + p) ,
    \end{align*}
with probability larger than $1-2\exp({-t^2(\sqrt{n} + \sqrt{p})^2})$.
\end{theorem}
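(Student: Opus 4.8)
The plan is to first reduce the predictor to a spectral truncation of $Y$. Exactly as in Corollary~\ref{cor: upper bound original problem}, the explicit predictors satisfy $\hat A_r X \hat B_r = U_Y\, Diag_{n,p}(\sigma_k(Y),1\le k\le r\wedge r_Y)\, V_Y^\top = [Y]_r$, the best rank-$r$ approximation of $Y$; hence $\|Y-\hat A_r X\hat B_r\|_F^2 = \sum_{k>r}\sigma_k(Y)^2$ and $\widehat\sigma_r^2 = \frac1{np}\sum_{k>r}\sigma_k(Y)^2$. Writing $N:=r_{max}\vee r_Y$ and inserting $\lambda=(1+\varepsilon)np/N$, the data-driven penalty collapses to $\lambda r\widehat\sigma_r^2 = \frac{(1+\varepsilon)r}{N}\|Y-[Y]_r\|_F^2$. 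I would then start from the defining inequality of $\bar r$: for every fixed $r\in[r_{max}]$,
$$
\Bigl(1+\tfrac{(1+\varepsilon)\bar r}{N}\Bigr)\|Y-[Y]_{\bar r}\|_F^2 \le \Bigl(1+\tfrac{(1+\varepsilon)r}{N}\Bigr)\|Y-[Y]_r\|_F^2 .
$$

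The central step is to pass from residuals to prediction errors. Setting $M^*=A^*XB^*$, $D_s:=\|M^*-[Y]_s\|_F^2$ and $G_s:=\langle [Y]_s-M^*,E\rangle_F$, the identity $Y-[Y]_s=(M^*-[Y]_s)+E$ gives $\|Y-[Y]_s\|_F^2 = D_s - 2G_s + \|E\|_F^2$. Two cancellations make the argument work and constitute its crux: (i) the noise energy $\|E\|_F^2$ — which the data-driven penalty reintroduces through $\widehat\sigma_r^2$ — is common to the $\bar r$ and $r$ residuals and cancels once I discard the nonnegative left-hand penalty $\frac{(1+\varepsilon)\bar r}{N}\|Y-[Y]_{\bar r}\|_F^2$; and (ii) in the difference of cross terms the signal cancels, $2(G_{\bar r}-G_r)=2\langle [Y]_{\bar r}-[Y]_r,E\rangle_F$, so the relevant matrix has rank at most $\bar r + r \le 2r_{max}$, independent of $r^*$ — which is exactly why the stochastic term will scale with $r_{max}$ rather than $r_{max}+r^*$. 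Together these yield
$$
D_{\bar r} \le D_r + 2\langle [Y]_{\bar r}-[Y]_r,E\rangle_F + \tfrac{(1+\varepsilon)r}{N}\|Y-[Y]_r\|_F^2 .
$$

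It remains to bound the last two terms. For the penalty term, since $N\ge r_Y$, I would use $\frac1N\sum_{k=r+1}^{r_Y}\sigma_k(Y)^2 \le \frac{r_Y-r}{N}\sigma_{r+1}(Y)^2 \le \sigma_{r+1}(Y)^2$, then Weyl's inequality $\sigma_{r+1}(Y)^2 \le 2\sigma_{r+1}(M^*)^2 + 2\|E\|_{op}^2$, producing the bias contribution proportional to $(1+\varepsilon)r\,\sigma_{r+1}(M^*)^2$ together with an $\|E\|_{op}^2$ remainder. For the cross term I would use that any matrix $\Delta$ of rank $\rho$ obeys $|\langle\Delta,E\rangle_F|\le\sqrt{\rho}\,\|\Delta\|_F\,\|E\|_{op}$ (Cauchy–Schwarz over the $\rho$ singular directions), apply it with $\rho\le 2r_{max}$ and $\|[Y]_{\bar r}-[Y]_r\|_F\le\sqrt{D_{\bar r}}+\sqrt{D_r}$, and then invoke Young's inequality to absorb a fraction of $D_{\bar r}$ into the left-hand side and a multiple of $D_r$ into the first term, the leftover being a multiple of $r_{max}\|E\|_{op}^2$.

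Finally I would insert the high-probability operator-norm bound $\|E\|_{op}^2 \le C(1+t)^2\sigma^2(\sqrt n+\sqrt p)^2 \le 2C(1+t)^2\sigma^2(n+p)$, valid on the event of probability at least $1-2\exp(-t^2(\sqrt n+\sqrt p)^2)$ used throughout the paper, collect all $r_{max}\|E\|_{op}^2$ contributions into the stated variance term, and take the minimum over $r\in[r_{max}]$; choosing the Young weights appropriately fixes the explicit constants $9$, $6(1+\varepsilon)$ and $12C(2+\varepsilon)$. The main obstacle — and the step I would think through most carefully — is precisely the double cancellation above: recognizing that the random noise energy $\|E\|_F^2$ injected by the estimated variance $\widehat\sigma_r^2$ never needs to be controlled because it cancels between the two ranks, and that the same algebra removes $M^*$ from the stochastic term so that its effective rank is $2r_{max}$. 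Everything after that is a routine bias–variance split.
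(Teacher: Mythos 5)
Your argument is correct and shares all the essential mechanics of the paper's proof: the reduction of $\hat A_r X\hat B_r$ to the truncation $[Y]_r$, the basic inequality from the definition of $\bar r$, the cross-term bound $\langle \Delta,E\rangle_F\le\sqrt{\rank\Delta}\,\|\Delta\|_F\|E\|_{op}$ with $\rank\Delta\le 2r_{max}$, Young's inequality with $\alpha=3/2$, $\beta=1/2$, Weyl's inequality for $\sigma_{r+1}(Y)$, and the concentration bound \eqref{ineq: concentration}. The one place you genuinely diverge is the treatment of the data-driven penalties: the paper retains the signed difference $\frac{\lambda r}{np}\sum_{k>r}\sigma_k(Y)^2-\frac{\lambda\bar r}{np}\sum_{k>\bar r}\sigma_k(Y)^2$ and splits into the cases $r\le\bar r$ and $r>\bar r$ to exploit partial cancellation against the Young remainder $(\alpha+\beta)(r+\bar r)\|E\|_{op}^2$, whereas you simply discard the nonnegative penalty at $\bar r$ and bound the remaining penalty by $(1+\varepsilon)r\sigma_{r+1}(Y)^2$ using $r_Y\le r_{max}\vee r_Y$. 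Your route is shorter and eliminates the case analysis, which is a real simplification; the price is in the constants: with the Young weights forced by $9$ and $6(1+\varepsilon)$, your residual noise term is $\bigl(2(\alpha+\beta)+2(1+\varepsilon)\bigr)r_{max}\|E\|_{op}^2$, giving $6(3+\varepsilon)r_{max}\|E\|_{op}^2$ after multiplying by $3$, rather than the paper's $6(2+\varepsilon)r_{max}\|E\|_{op}^2$. So you do not literally obtain $12C(2+\varepsilon)$ as claimed, but since $3+\varepsilon\le\tfrac32(2+\varepsilon)$ the stated inequality survives after enlarging the unspecified absolute constant $C$. A last cosmetic point: the exact cancellation of $\|E\|_F^2$ happens between the two unit-coefficient residuals; the noise energy carried by $\widehat\sigma_r^2$ in the surviving penalty is not cancelled but absorbed through $\sum_{k>r}\sigma_k(Y)^2\le(r_Y-r)\sigma_{r+1}(Y)^2$ and Weyl, exactly as in the paper.
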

Apply the Corollary \ref{cor: upper bound original problem}, to get under the assumptions of Theorem \ref{th: rank adaptive predictor upper bound unknown variance} that
    \begin{align*}        
    \|A^* X B^* - \hat A_{\bar r} X \hat B_{\bar r}\|_F^2 \leq \underset{r \in [r_{max}]}{\min} &\left\{ 9^2 \underset{\substack{A,B:\\ r_A \wedge r_B \leq r}}{\inf} \|A^*X B^* - A_r X B_r\|_F^2 + 6(1+\varepsilon) \cdot r  \sigma_{r+1}(A^* X B^*)^2 \right\} \\
        & + 12(20+\varepsilon)C (1 + t)^2 \cdot  \sigma^2 r_{max} (n + p) ,
    \end{align*}
with probability larger than $1-2\exp({-t^2(\sqrt{n} + \sqrt{p})^2})$.

Note that the minimum on the right-hand side of the previous display is always smaller than its value at $r=r^*$ if $r_{max}$ is larger than $r^*$, giving under the assumptions of Theorem~\ref{th: rank adaptive predictor upper bound unknown variance} that
$$
\|A^* X B^* - \hat A_{\bar r} X \hat B_{\bar r}\|_F^2 \leq 12(20+\varepsilon) C(1+t)^2\cdot \sigma^2 r_{max}(n+p) .
$$
In order to compare to the previous results, note that the upper bound derived from Theorem~\ref{th: rank adaptive predictor upper bound} for the value $r=r^*$ and the least value $\lambda = 4C(1+t)^2\sigma^2(n+p)$ gives the very similar bound
$$
\|A^* X B^* - \hat A_{\hat r} X \hat B_{\hat r}\|_F^2 \leq  24C(1+t)^2 \cdot \sigma^2 r^*(n+p).
$$

From a computational point of view, it is preferable to change $\widehat{\sigma}_r^2$ in some cases. For example, we use in our numerical simulations
$$
\widehat{\sigma}_r^2 = \frac 1{np-(m \wedge q) r_X} \|Y - \hat A_r X \hat B_r\|_F^2 
$$ 
when $n\geq m$, $p\geq q$ and thus $np>(m \wedge q) r_X$. It is straightforward to prove the analogue of Theorem~\ref{th: rank adaptive predictor upper bound unknown variance} by considering $\lambda = (1+\varepsilon)(np-(m \wedge q) r_X)/(r_{max}\vee r_Y)$.

\section{Nuclear norm penalized learning} \label{sec: nuclear}

Nuclear norm penalized least squares is known to exhibit good properties, see \cite{bach2008consistency} or \cite{negahban2011estimation}. Hence it may show advantages over rank-penalized methods.  
Let us define the nuclear norm penalized (NNP) optimisation problem 
\begin{equation} \label{opt: nuclear pen}
    \underset{A, B}{\min}\,  \|Y - AXB\|_F^2 + 2 \lambda \cdot \|AXB\|_*,
\end{equation}
for some $\lambda >0$. The objective of the optimization problem is non-jointly convex in $A$ and $B$. Note that in matrix regression (when $A^*$ is the identity matrix) the nuclear norm of $XB$ has been used , see \cite{koltchinskii}, or other adaptive forms depending on the feature matrix $X$, \cite{ChenDongChan}. However, we exhibit explicit predictors belonging to the set of solutions of this problem and show an oracle inequality they satisfy.

\begin{theorem} \label{th: nuc norm predictor upper bound}
The predictors $(\bar A, \bar B)$ defined by
\begin{equation}\label{eq: nuclear pen}
    \bar A = U_Y I_{n,m} U_X^\top \quad \text{and} \quad
    \bar B = V_X \cdot Diag_{q,p} \left( \frac{(\sigma_k(Y) - \lambda)_+}{\sigma_k(X)}, \, 1\leq k \leq r_Y \wedge r_X\right) V_Y^\top
\end{equation}
are solutions to the problem in \eqref{opt: nuclear pen}. Moreover, if $\lambda$ is such that, for some absolute constant $C>0$ and for any $t>0$, $\lambda \geq  2 C (1+t)^2 \sigma^2 ({n}+ {p})$, they satisfy the oracle inequality
    \begin{equation*}
        \|A^* X B^* - \bar A X \bar B\|_F^2 \leq 9 \underset{r \in [n \wedge p \wedge r_X]}{\min} \left\{  \sum \limits_{k=r+1}^{r^*} \sigma_k(A^* X B^*)^2 \cdot \mathbf{1}_{r<r^*} + 16 \lambda r \right\},
    \end{equation*}
with probability larger than $1-2\exp({-t^2(\sqrt{n}+ \sqrt{p})^2})$.
\end{theorem}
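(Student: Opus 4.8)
The plan is to follow the same two-stage reduction used for the rank-penalized estimator: diagonalize the problem, exhibit the minimizer explicitly, and then prove the oracle inequality in the diagonal model and transport it back by orthogonal invariance.

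First I would rewrite \eqref{opt: nuclear pen} in diagonal form. With $A_0=U_Y^\top A U_X$ and $B_0=V_X^\top B V_Y$ one has $AXB=U_Y(A_0\Sigma_X B_0)V_Y^\top$, so the Frobenius loss equals $\|\Sigma_Y-A_0\Sigma_X B_0\|_F^2$ and, since $U_Y,V_Y$ are orthogonal, the nuclear norm is likewise invariant, $\|AXB\|_*=\|A_0\Sigma_X B_0\|_*$. As $\Sigma_X$ has rank $r_X$, the map $(A_0,B_0)\mapsto A_0\Sigma_X B_0$ is onto the set of $n\times p$ matrices of rank at most $r_X$, so the problem is equivalent to
\[
\min_{M:\ \rank M\le r_X}\ \|\Sigma_Y-M\|_F^2+2\lambda\|M\|_*.
\]
Second, I would solve this diagonal problem in closed form. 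By the von Neumann trace inequality $\langle\Sigma_Y,M\rangle_F\le\sum_k\sigma_k(Y)\sigma_k(M)$, with equality when $M$ is diagonal and ordered like $\Sigma_Y$; since the penalty sees only the singular values of $M$, an optimal $M$ may be taken diagonal with nonnegative entries $m_k$, and the objective decouples into $\sum_k\big[(\sigma_k(Y)-m_k)^2+2\lambda m_k\big]$ under the constraint that at most $r_X$ of the $m_k$ are nonzero. Coordinatewise the minimizer is the soft-threshold $m_k=(\sigma_k(Y)-\lambda)_+$; because these are already decreasing, the rank constraint is met by retaining the top indices $k\le r_X\wedge r_Y$. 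This is exactly $\bar A_0\Sigma_X\bar B_0=Diag_{n,p}\big((\sigma_k(Y)-\lambda)_+,\,k\le r_X\wedge r_Y\big)$, the prediction produced by $(\bar A,\bar B)$ of \eqref{eq: nuclear pen}, which proves they belong to the solution set.

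Third, for the oracle bound I would run the basic inequality in the diagonal model $\Sigma_Y=M^*+E_0$, where $M^*=A_0^*\Sigma_X B_0^*$ has $\sigma_k(M^*)=\sigma_k(A^*XB^*)$ and $E_0=U_Y^\top E V_Y$ satisfies $\|E_0\|_{op}=\|E\|_{op}$. Writing $\bar M=\bar A_0\Sigma_X\bar B_0$, fixing $r$ and taking the feasible competitor $M=[M^*]_r$ (rank $r\le r_X$, bias $\|M^*-M\|_F^2=\sum_{k>r}\sigma_k(A^*XB^*)^2$), optimality of $\bar M$ gives
\[
\|M^*-\bar M\|_F^2\le\|M^*-M\|_F^2+2\langle E_0,\bar M-M\rangle_F+2\lambda\big(\|M\|_*-\|\bar M\|_*\big).
\]

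The crux, and the step I expect to be the main obstacle, is the control of the cross term, because $\Delta:=\bar M-M$ is generically full rank, so the naive bound $\|\Delta\|_*\le\sqrt{\rank\Delta}\,\|\Delta\|_F$ is hopeless. I would instead split $\Delta$ along the column and row spaces $S_1,S_2$ of $M=[M^*]_r$: with $\mathcal P^\perp(\Delta)=P_{S_1}^\perp\Delta P_{S_2}^\perp$ and $\mathcal P=\mathrm{Id}-\mathcal P^\perp$, one has $\rank\mathcal P(\Delta)\le 2r$ and the orthogonality identity $\|M+\mathcal P^\perp(\Delta)\|_*=\|M\|_*+\|\mathcal P^\perp(\Delta)\|_*$, whence $\|M\|_*-\|\bar M\|_*\le\|\mathcal P(\Delta)\|_*-\|\mathcal P^\perp(\Delta)\|_*$, while trace duality gives $\langle E_0,\Delta\rangle_F\le\|E_0\|_{op}(\|\mathcal P(\Delta)\|_*+\|\mathcal P^\perp(\Delta)\|_*)$. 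Choosing $\lambda$ to dominate the noise makes the $\mathcal P^\perp$ contribution nonpositive and leaves only $\|\mathcal P(\Delta)\|_*\le\sqrt{2r}\,\|\Delta\|_F\le\sqrt{2r}\,(\|M^*-\bar M\|_F+\|M^*-M\|_F)$. This is where the operator-norm concentration enters: on the event $\|E\|_{op}^2\le 2C(1+t)^2\sigma^2(n+p)$, of probability at least $1-2\exp(-t^2(\sqrt n+\sqrt p)^2)$, the threshold condition $\lambda\ge 2C(1+t)^2\sigma^2(n+p)$ is exactly what is needed. Finally an AM-GM split absorbs the quadratic term in $\|M^*-\bar M\|_F$ into the left-hand side and, with the appropriate weights, produces the factor $9$ in front of the bias together with the linear-in-$r$ penalty of the statement; minimizing over $r$ and returning to the original coordinates through $\|M^*-\bar M\|_F=\|A^*XB^*-\bar A X\bar B\|_F$ completes the argument.
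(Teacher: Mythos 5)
Your proposal is correct and follows essentially the same route as the paper: the first part reduces the problem to soft-thresholding the singular values of $Y$ (the paper solves the unconstrained problem in $M$ and then realizes $\bar M=\bar A X\bar B$, while you diagonalize and invoke von Neumann, which amounts to the same computation), and the oracle inequality is obtained from the same basic inequality plus the nuclear-norm decomposability argument, which you spell out via the projections onto the row/column spaces of the competitor where the paper simply cites Bunea et al. The only point worth noting is that your step ``choosing $\lambda$ to dominate the noise'' needs $\lambda\geq\|E\|_{op}$ while the stated event only gives $\lambda\geq\|E\|_{op}^2$; this calibration of $\lambda$ versus $\sqrt{\lambda}$ is handled in exactly the same implicit way in the paper's own proof, so your argument is faithful to it.
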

The proof can be found in Section~\ref{sec:proofs}.

{\bf Remark. }Another approach could be to consider the model under the vectorized form \eqref{eq: vec form model} and solve the problem
\begin{equation*}
    \min_{A,B} \|vec(Y)^\top - vec(X)^\top \cdot A^\top \otimes B\|_2^2 + 2 \lambda \|A^\top \otimes B\|_*,
\end{equation*}
for some $\lambda >0$. Recall that $A^\top \otimes B$ denotes the tensor product of matrices $A^\top$ and $B$ and that we can write
$\|A^\top \otimes B\|_* = \sum_{k, j \geq 1} \sigma_k(A) \sigma_j(B).$
However, the features are 1-dimensional and we loose the structured information contained in the original matrix $X$. This approach could make more sense in the case of repeated observation $(Y_t, X_t)$ for $t$ in $[T]$, by stacking the rows $vec(Y_t)^\top$ and $vec(X_t^\top)$ into matrices $\mathbb Y$ and $\mathbb X$, respectively, and do a classical matrix regression. Even so, the usual assumptions on the feature matrix $\mathbb X$ in order to achieve good prediction are not reasonable in this context as they are not much related to the original matrix data sets $X_t$, $t$ in $[T]$.

{\bf Remark (Sufficient conditions for identifiability)} 
We have indicated at several times that many couples of matrices $(A,B)$ solve the equation $M=AXB$ for a given matrix $M$. Given the SVD of the matrix $M$, we may reduce the dimensionality of the problem by choosing the solution $( A,  B)$ given by $ A = U_M A_0 U_X^\top$ and $ B = V_X  B_0 V_M^\top$, with $A_0$ and $B_0$ diagonal matrices such that 
$$
\sigma_k(A) \sigma_k(X) \sigma_k(B) = \sigma_k(M), \quad 
\text{for all } k \leq r_X \wedge r_A \wedge r_B. 
$$
Thus, even under diagonal forms we can only identify the product of respective singular values of $A$ and $B$. We can only hope to identify matrices $A$ and $B$ under very restrictive conditions where $X^\top X$ has full rank and either the matrix $A$ or the matrix $B$ is assumed to have known singular values, {\it e.g.} like a projector with singular values 1 or 0. Few other setups are known to be identifiable in the literature of factorisation of matrices, {\it e.g.} non-negative matrix factorisation (NMF), see \cite{donoho2003does}, NMF for topic models \cite{ke2022using}, \cite{bing2020optimal}, \cite{klopp2021assigning} or covariance matrix factorization \cite{fan2011high}. 

\section{Numerical Results} \label{sec: numerics}
Let us set the dimensions of the observed matrix $Y$ to be $n=100$ and $p = 300$, the dimensions of the design matrix $X$ to be $m = 50$ and $q = 60$. We randomly generate three matrices: $A^*$, $B^*$, and $X$, with independent random gaussian entries with mean $0$ and variance $1$. These matrices are then projected onto the best low-rank matrix approximation, with the matrix $A^*$ having a rank $r_A^* = 16$, the matrix $B^*$ having a rank $r_B^* = 12$, and the matrix $X$ having a rank $r_X = 25$. The signal matrix is defined as $A^* X B^*$ and shows a rank of $12$ in all experiments. We also define various settings for the variance $\sigma^2$ of the Gaussian noise $E$ so that the signal-to-noise ratio $SNR:={\sigma_{r^*}(A^*XB^*)^2}/{\sigma_1(E)^2}$ varies approximately in the range $[0.5, 2]$. 

\begin{figure}
\minipage{0.99\textwidth}
 \centering
  \includegraphics[width=\linewidth]{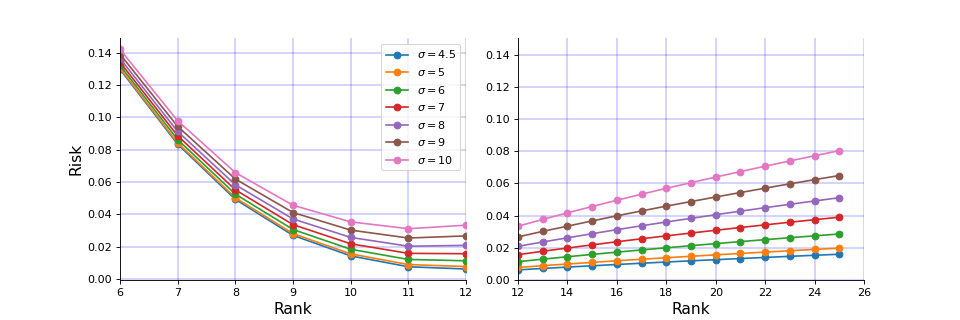}
\endminipage\hfill
\caption{Evolution of the risk $\dfrac{\|\hat{A}_r X \hat{B}_r - A^* X B^*\|_F^2}{\|A^* X B^*\|_F^2}$ in function of $r$ for different values of $\sigma$}
\label{fig:risk_evol_rank}
\end{figure}

Figure~\ref{fig:risk_evol_rank} illustrates the prediction performances of the predictor $\hat{A}_r X \hat{B}_r$, defined in \eqref{estimators_fixed rank}, for different values of $r$. For $\sigma < 8$ giving the $SNR$ approximately above the value 1, the prediction risk decreases when the rank increases while remaining bounded from above by $12$ and then increases with the rank when the rank is above $12$. For $\sigma \geq 8$ giving the $SNR$ below the value 1, the prediction risk decreases when the rank increases while remaining bounded from above by $11$ and then increases with the rank when the rank is above $11$. It highlights that the best predictor is achieved when $r = r^* = 12$ for small noise variance levels (\textit{i.e.} $\sigma < 8$) and when $r = 11$ for strong noise variance levels (\textit{i.e.} $\sigma \geq 8$). This shows that there is a strong overfitting phenomenon in the case of strong noise and that it is therefore better to slightly underestimate the rank in these situations. 

\begin{figure}
\minipage{0.99\textwidth}
 \centering
  \includegraphics[width=\linewidth]{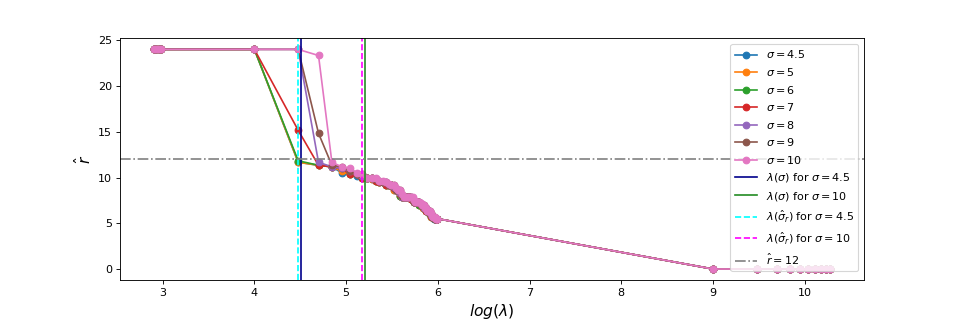}
\endminipage\hfill
\caption{Evolution of the estimated $\hat{r}$ as a function of $\log(\lambda)$ for different values of $\sigma$}
\label{fig:estimated_rank_lambda}
\end{figure}

Figure~\ref{fig:estimated_rank_lambda} represents the predicted $\hat{r}$, defined in \eqref{opt: rank search minimization}, for various values of $\lambda$. Independently of the noise variance level, for small values of $\lambda$ the estimated $\hat{r}$ is maximal and there is $\hat{r} = r_X = 25$. This illustrates the previously exposed overfitting phenomenon, that is the higher the rank $r$, the lower the error $\|Y - \hat{A}_r X \hat{B}_r\|_F^2$.  As $\lambda$ increases the penalty on the rank $r$ becomes more important in the minimization procedure and $\hat{r}$ decreases. However, for moderate values of $\lambda$ (\textit{i.e.} approximately $\log(\lambda) \leq 5$) the smaller the noise variance level $\sigma$, the faster $\hat{r}$ decreases. Ultimately, for large values of $\lambda$ (\textit{i.e.} approximately $\log(\lambda) > 5$) the rate of decay of $\hat{r}$ as a function of $\lambda$ no longer depends on $\sigma$. 

The numerical value of $\lambda$ is an important issue. We exhibit explicit (fast to calculate) procedures for the choice of this tuning parameter. In the case of {\it known noise variance}, the rule of thumb suggested by \cite{birge2007minimal} indicates to choose  
$$
\lambda(\sigma) = 2 C ({n} + {p}) \sigma^2 (1+t)^2
$$ in Theorem~\ref{th: rank adaptive predictor upper bound} with $t=0$, and $C=2$. The two solid vertical lines represent $\lambda(4.5)$ (blue) and $\lambda(10)$ (green). With these choices of the tuning parameter we get successful estimators of the underlying rank of the signal $\hat{r} \approx 12 = r^*$. We underline that in the small noise regime the rank is slightly overestimated and in the strong noise regime it is slightly underestimated. This behaviour perfectly matches the results drawn from Figure~\ref{fig:risk_evol_rank} showing that overestimating the rank in small noise regime does not impact the performances and slightly underestimating it in strong noise regime improves the performances. 

However, in real world applications the noise has {\it unknown variance}. This raises the question of how to choose a data-driven $\lambda$ in this case, without deteriorating the prediction. This situation is more challenging as it first requires an estimator of $\sigma^2$ before using the previously exposed rule of thumb. We choose the initial value of $r$ equal to $r_X \wedge n \wedge p$ and propose the $r$-dependent estimator $\widehat{\sigma}^2_r := \dfrac{\|Y - \hat{A}_r X \hat{B}_r \|_F^2}{np - (m \wedge q) r_X}$. 
It allows to compute the previously defined $\lambda(\widehat{\sigma}_r)$ and using this data-driven tuning parameter we produce the rank estimator $\bar{r}$. This procedure takes $r$ as an argument and returns $\lambda(\widehat{\sigma}_r)$ and $\bar{r}$. However, when $r$ is substantially larger than $r^*$, $\hat{A}_r X \hat{B}_r$ is overfitting $Y$ and performing this procedure once will not lead to a satisfying output $\bar{r}$. Hence we iterate while $\bar{r}< r$. We note $\lambda(\widehat{\sigma}_{\bar r})$ and $\bar r$ the final outputs of the procedure. The two dashed vertical lines represent $\lambda(\widehat{\sigma}_{\bar r})$ when $\sigma = 4.5$ (cyan) and $\sigma = 10$ (magenta). The proposed procedure exhibits great numerical properties.

Finally, numerical simulations generated in the same context, with different values for the true underlying ranks, show similar excellent prediction bounds, combined with correct rank selection. Together with the current case where $\min(r_A^*, r_X, r_B^*) = r_B^*$, we have explored successfully the cases $\min(r_A^*, r_X, r_B^*) = r_A^*$, $\min(r_A^*, r_X, r_B^*) = r_X$ and $\min(r_A^*, r_X, r_B^*) = r_A^* = r_X = r_B^*$.

\section{Proofs} \label{sec:proofs}

{\bf Basic facts} For any matrix $M \in \mathbb{R}^{n \times m}$, $\|M\|_*^2 \leq r_M \|M\|_F^2$. In addition, for any matrices $M_1$ and $M_2$ in $\mathbb{R}^{n \times m}$, the following inequalities hold $\langle M_1, M_2 \rangle_F \leq \|M_1\|_* \|M_2\|_{op}$ and $\|M_1 + M_2\|_F \leq \|M_1\|_F + \|M_2\|_F$. Furthermore, if we set $a = \rank M_1 \wedge \rank M_2$ then $\langle M_1, M_2 \rangle_F \leq \|M_1\|_{(2, a)} \|M_2\|_{(2, a)}$.

\begin{lemma} \label{lemma gaussian entries} Let $E$ be a $n\times p$ random matrix whose entries are independent and having Gaussian distribution $\mathcal{N}(0, \sigma^2)$. If $U$ and $V$ belong to $\mathcal{O}_n$ and $\mathcal{O}_p$ respectively, then $E_0:= U^\top E V$ has independent entries with Gaussian distribution $\mathcal{N}(0, \sigma^2)$.
\end{lemma}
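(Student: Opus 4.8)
The plan is to reduce the statement to the rotational invariance of a standard Gaussian vector by stacking the entries of $E$ into a single vector. Writing $vec(\cdot)$ for the column-stacking operator as in \eqref{eq: vec form model}, the hypothesis that $E$ has independent $\mathcal N(0,\sigma^2)$ entries is exactly the statement that $vec(E)\sim\mathcal N(0,\sigma^2 I_{np})$. The key algebraic fact I would invoke is the standard identity $vec(AXB)=(B^\top\otimes A)\,vec(X)$, which applied with $A=U^\top$, $X=E$ and $B=V$ yields $vec(E_0)=(V^\top\otimes U^\top)\,vec(E)$. The whole argument then rests on showing that this linear map preserves the law $\mathcal N(0,\sigma^2 I)$.

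The steps, in order, are as follows. First I would note that $Q:=V^\top\otimes U^\top$ is orthogonal, since $QQ^\top=(V^\top\otimes U^\top)(V\otimes U)=(V^\top V)\otimes(U^\top U)=I_p\otimes I_n=I_{np}$, using that $U\in\mathcal O_n$ and $V\in\mathcal O_p$ together with the mixed-product property of the Kronecker product. Second, because $vec(E_0)=Q\,vec(E)$ is a linear image of a Gaussian vector it is itself Gaussian, with mean $Q\cdot 0=0$ and covariance $Q(\sigma^2 I_{np})Q^\top=\sigma^2 QQ^\top=\sigma^2 I_{np}$; hence $vec(E_0)\sim\mathcal N(0,\sigma^2 I_{np})$. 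Third, a centered Gaussian vector with covariance $\sigma^2 I_{np}$ has independent $\mathcal N(0,\sigma^2)$ coordinates, and un-stacking this vector into the $n\times p$ matrix $E_0$ gives precisely the conclusion that the entries of $E_0$ are independent and $\mathcal N(0,\sigma^2)$-distributed.

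An equivalent route, which avoids the Kronecker identity, is to argue directly on the entries: $(E_0)_{ij}=\sum_{a,b}U_{ai}V_{bj}E_{ab}$ is a linear combination of jointly Gaussian variables, hence the whole array $(E_0)_{ij}$ is jointly Gaussian with mean zero, and a one-line computation using $\E[E_{ab}E_{cd}]=\sigma^2\delta_{ac}\delta_{bd}$ together with $U^\top U=I_n$, $V^\top V=I_p$ gives $\E[(E_0)_{ij}(E_0)_{kl}]=\sigma^2\delta_{ik}\delta_{jl}$. There is no real obstacle here beyond careful bookkeeping; the only genuine point to be careful about is that independence of the entries of $E_0$ does not follow from the vanishing of the cross-covariances alone but requires the joint Gaussianity established in the preceding step, for which one relies on the fact that an affine image of a Gaussian vector is Gaussian.
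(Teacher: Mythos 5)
Your proposal is correct and follows essentially the same route as the paper: vectorize $E_0$ via the identity $vec(U^\top E V)=(V^\top\otimes U^\top)vec(E)$, observe that $V^\top\otimes U^\top$ is orthogonal, and conclude that $vec(E_0)\sim\mathcal N(0,\sigma^2 I_{np})$. Your version simply spells out the mixed-product verification of orthogonality and adds an optional entrywise-covariance variant that the paper omits.
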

\begin{proof}[Proof of Lemma~\ref{lemma gaussian entries}] Note that we can vectorize the matrix $E_0$ and get that
$$
vec(E_0) = (V^\top \otimes U^\top) \cdot vec(E),
$$
where $vec(E)$ is a Gaussian vector of dimension $np$, centered, with variance $\sigma^2 I_{np}$. Moreover, the tensor product $V^\top \otimes U^\top$ belongs to $\mathcal{O}_{np}$, thus $vec(E_0)$ is still a Gaussian vector with distribution $\mathcal{N}_{np}(0, \sigma^2 I_{np})$.
\end{proof}

Recall that, for an arbitrary matrix $M$, we denote $U_M \Sigma_M V_M^\top$ its SVD. 
\begin{lemma}\label{lemma:projection r}
If $M^*$ is a $n \times p$ matrix of rank $r^*$, than for any $r \leq n \wedge p$, we have
$$
\inf_{M:\rank M \leq r}  \|M-M^*\|_F^2 
 = \sum_{k=r+1}^{r^*} \sigma_k(M^*)^2 \cdot \mathbf{1}_{r < r^*},
$$
and the infimum is attained by the projection $[ M^*]_r$ of $M^*$ on the space of $n \times p$ matrices with rank $r$ given by the matrix
$$[ M^*]_r = U_{M^*} \cdot Diag_{n,p}( \sigma_1(M^*), ..., \sigma_{r \wedge r^*}(M^*)) \cdot V_{M^*}^\top.$$
\end{lemma}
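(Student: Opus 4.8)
My plan is to recognize this as the Eckart--Young--Mirsky theorem for the Frobenius norm and to prove it by first reducing to a diagonal problem via unitary invariance, then handling the upper bound (attainment) by direct computation and the lower bound (optimality) via Weyl's inequality for singular values. Throughout, write the SVD as $M^* = U_{M^*} \Sigma_{M^*} V_{M^*}^\top$ with $U_{M^*} \in \mathcal{O}_n$, $V_{M^*} \in \mathcal{O}_p$, and $\Sigma_{M^*} = Diag_{n,p}(\sigma_1(M^*), \ldots, \sigma_{r^*}(M^*))$.

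First I would exploit the invariance of the Frobenius norm under left and right multiplication by orthogonal matrices (the same property already used around equation~\eqref{eq: diagonal model}). For any competitor $M$ with $\rank M \leq r$, setting $N := U_{M^*}^\top M V_{M^*}$ gives $\|M - M^*\|_F^2 = \|N - \Sigma_{M^*}\|_F^2$, and $N$ has the same rank as $M$ since multiplication by orthogonal matrices is a bijection preserving rank. Hence the minimization over $\{M : \rank M \leq r\}$ is equivalent to the diagonal problem $\inf_{N : \rank N \leq r} \|N - \Sigma_{M^*}\|_F^2$. The upper bound together with attainment is then immediate: the candidate $N_0 = Diag_{n,p}(\sigma_1(M^*), \ldots, \sigma_{r \wedge r^*}(M^*))$ has rank $r \wedge r^* \leq r$, corresponds to $M = [M^*]_r$, and satisfies $\|N_0 - \Sigma_{M^*}\|_F^2 = \sum_{k=r+1}^{r^*} \sigma_k(M^*)^2$, which vanishes exactly when $r \geq r^*$, matching the indicator $\mathbf{1}_{r < r^*}$. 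Note also that when $r \geq r^*$ one may simply take $M = M^*$ to obtain error zero, so the interesting regime is $r < r^*$.

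The main obstacle is the matching lower bound, i.e.\ showing that no matrix of rank at most $r$ can do strictly better than $[M^*]_r$. For this I would invoke Weyl's inequality for singular values, namely $\sigma_{i+j-1}(P+Q) \leq \sigma_i(P) + \sigma_j(Q)$ for all $i,j \geq 1$ and all conformable matrices $P, Q$. Applying it with $P = N$, $Q = \Sigma_{M^*} - N$, and $j = r+1$, and using that $\rank N \leq r$ forces $\sigma_{r+1}(N) = 0$, yields, for every $i \geq 1$,
$$
\sigma_{i+r}(\Sigma_{M^*}) = \sigma_{i+(r+1)-1}\big(N + (\Sigma_{M^*}-N)\big) \leq \sigma_{r+1}(N) + \sigma_i(\Sigma_{M^*}-N) = \sigma_i(\Sigma_{M^*}-N).
$$
Squaring and summing over $i$, and recalling that the singular values of $\Sigma_{M^*}$ coincide with those of $M^*$, we obtain
$$
\|N - \Sigma_{M^*}\|_F^2 = \sum_{i \geq 1} \sigma_i(\Sigma_{M^*}-N)^2 \geq \sum_{i \geq 1} \sigma_{i+r}(M^*)^2 = \sum_{k=r+1}^{r^*} \sigma_k(M^*)^2,
$$
since $\sigma_k(M^*) = 0$ for $k > r^*$. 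This lower bound holds for every $N$ of rank at most $r$, and it coincides with the value attained at $N_0$; transporting back through the orthogonal change of basis, the same identity holds for $M$ and $[M^*]_r$, which completes the proof. The only delicate point to state carefully is Weyl's inequality itself, which I would cite as a standard fact (or prove in one line from the variational/min--max characterization of singular values if a self-contained argument is preferred).
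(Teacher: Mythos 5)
Your proof is correct. Note first that the paper itself gives no proof of this lemma: it is stated as an auxiliary fact and implicitly treated as the classical Eckart--Young--Mirsky theorem for the Frobenius norm, so there is no in-paper argument to compare against. Your two-step structure --- reduction to the diagonal case by orthogonal invariance of $\|\cdot\|_F$ and of the rank, then attainment by direct computation at $N_0$ and optimality via Weyl's inequality $\sigma_{i+j-1}(P+Q)\le\sigma_i(P)+\sigma_j(Q)$ applied with indices $r+1$ (attached to $N$, where $\sigma_{r+1}(N)=0$ since $\rank N\le r$) and $i$ (attached to $\Sigma_{M^*}-N$) --- is the standard argument and every step checks out; in particular the truncation $\sum_{i\ge 1}\sigma_{i+r}(M^*)^2=\sum_{k=r+1}^{r^*}\sigma_k(M^*)^2$ is justified because $\sigma_k(M^*)=0$ for $k>r^*$, which also accounts for the indicator $\mathbf{1}_{r<r^*}$. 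The only cosmetic slip is that you announce ``$j=r+1$'' while the roles of $i$ and $j$ in your displayed application of Weyl are swapped relative to that announcement; the inequality as displayed is nevertheless the correct instance. The initial diagonalization is not strictly necessary (Weyl's inequality could be applied directly to $M$ and $M^*-M$), but it makes the attainment computation transparent and mirrors the device the paper uses around equation \eqref{eq: diagonal model}, so it fits the surrounding development well.
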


\subsection{Proof of Theorem \ref{th: fixed rank bounds diagonal}}

Let $r \in [n \wedge p \wedge r_X]$ and $(\hat{A_0}_{r}, \hat{B_0}_{r})$ defined in \eqref{estimators_diag_fixed_rank}. Let us denote here $M_0^* = A_0^* \Sigma_X B_0^*$ and $\hat M_0 = \hat{A_0}_{r} \Sigma_X \hat{B_0}_{r} $. By construction, $\hat M_0$ is the projection $[\Sigma_Y]_r$ of $\Sigma_Y$ onto the set of matrices with rank less than or equal to $r$, in the sense of Lemma~\ref{lemma:projection r}. Therefore,
\begin{align*}
    \|\Sigma_Y - \hat{M}_0 \|_F^2 &\leq \|\Sigma_Y - [M_0^*]_r\|_F^2 
\end{align*}
We recall that in our model $\Sigma_Y = M_0^* +E_0$ which leads to
\begin{align*}
    \|M_0^* - \hat M_0 + E_0\|_F^2 & \leq \|M_0^* - [M_0^*]_r +E_0\|_F^2.
\end{align*}
We expand the squares and arrange terms to get
\begin{align*}
    \|M_0^* - \hat M_0\|_F^2   
    &\leq \|M_0^* - [M_0^*]_r\|_F^2 + 2 \langle \hat M_0 - [M_0^*]_r, E_0 \rangle_F .
\end{align*}
Now, since $\rank(\hat M_0) = r$ and $\rank([M_0^*]_r) \leq r$, we get that $\rank(\hat M_0 - [M_0^*]_r) \leq 2r$. This inequality gives
\begin{align*}
    \|M_0^* - \hat M_0 \|_F^2 &\leq \|M_0^* - [M_0^*]_r \|_F^2 + 2 \|E_0\|_{(2, 2r)} \cdot \| \hat M_0 - [M_0^*]_r\|_{(2, 2r)} \\
    &\leq \|M_0^* - [M_0^*]_r \|_F^2 + 2 \|E_0\|_{(2, 2r)} \cdot \| \hat M_0 - [M_0^*]_r\|_F\\
    & \leq \|M_0^* - [M_0^*]_r \|_F^2 + 2 \|E_0\|_{(2, 2r)} \cdot\left( \| \hat M_0 - M_0^*\|_F +  \| M_0^* - [M_0^*]_r\|_F \right)    .
\end{align*}
We apply the inequality $2xy \leq \alpha x^2 + \alpha^{-1} y^2$ with $x, y \geq 0$ and $\alpha >0$. We obtain, for real numbers $\alpha>1$ and $\beta>0$, 
\begin{align*}
   (1-\alpha^{-1} ) \cdot \|M_0^* - \hat M_0 \|_F^2 &\leq (1+\beta^{-1}) \cdot \|M_0^* - [M_0^*]_r \|_F^2 + (\alpha + \beta) \cdot \|E_0\|_{(2, 2r)}^2.
\end{align*}
Let us use that $\|E_0\|_{(2, 2r)}^2 \leq 2r \cdot \|E_0\|_{op}^2$ and Lemma~\ref{lemma:projection r} to further get
\begin{align} \label{eq: upper bound v1}
    \|M_0^* - \hat M_0 \|_F^2 &\leq \dfrac{1+\beta^{-1}}{1-\alpha^{-1}} \cdot \inf_{M:\rank M \leq r}\|M_0^* - M\|_F^2 + \dfrac{\alpha + \beta}{1-\alpha^{-1}}\cdot 2r \|E_0\|_{op}^2.
\end{align}
Noticing that for any matrices $ A_0, B_0$ having rank less than or equal to $r$, $\rank (A_0 \Sigma_X B_0) \leq r_{A_0} \wedge r_X \wedge r_{B_0} \leq r$, we deduce that 
$$
\inf_{M:\rank M \leq r} \|M_0^* - M\|_F^2 \leq  \underset{\substack{A_0,B_0:\\ \rank A_0 \wedge \rank B_0 \leq r }}{\inf} \|M_0^* - A_0 \Sigma_X B_0\|_F^2.
$$
Indeed, the second inf is taken over a possibly smaller family of matrices. We actually show that equality holds in the previous display. Indeed, by Lemma~\ref{lemma:projection r} we have that $\inf_{M:\rank M \leq r} \|M_0^* - M\|_F^2 = \sum_{k=r+1}^{r^*} \sigma_k(M_0^*)^2\cdot \mathbf{1}_{r < r^*}$, where $r^*= \rank (M_0^*)$. Recall that $M_0^*=A_0^* \Sigma_X B_0^*$ is a product of diagonal matrices, giving that $r^* = \min(r_X, r_{A_0^*}, r_{B_0^*}) $ and $\sigma_k(M_0^*) = \sigma_k(A_0^*) \sigma_k(X) \sigma_k(B_0^*) \cdot \mathbf{1}_{k \leq r^*}$. 
Thus, the particular choice $$
A_{0r}:=Diag_{n,m}(\sigma_1(A_0^*),\ldots , \sigma_{r\wedge r_{A_0^*}}(A_0^*)) \text{ and }
B_{0r}:=Diag_{q,p}(\sigma_1(B_0^*),\ldots , \sigma_{r\wedge r_{B_0^*}}(B_0^*))
$$ solves exactly the problem giving $M_0^* = A_{0r} \Sigma_X B_{0r}$. Finally,
\begin{align}\label{eq: equality inf}
    \inf_{M:\rank M \leq r} \|M_0^* - M\|_F^2 =\underset{\substack{A_0,B_0: \\\rank A_0 \wedge \rank B_0 \leq r }}{\inf} \|M_0^* - A_0 \Sigma_X B_0\|_F^2.
\end{align}

Plugging this into \eqref{eq: upper bound v1} and considering the particular choice $\alpha = 3/2$ and $\beta = 1/2$ give the theorem:
\begin{align*} 
\|A_0^* \Sigma_X B_0^* - \hat{A_0}_{r} \Sigma_X \hat{B_0}_{r}\|_F^2 \leq 9 \underset{\substack{A_0,B_0:\\ \rank A_0 \wedge \rank B_0 \leq r }}{\inf} \left( \|A_0^* \Sigma_X B_0^* - A_0 \Sigma_X B_0\|_F^2 \right) + 12 r \|E_0\|_{op}^2 .
\end{align*}

The last step is the high-probability bound on $\|E_0\|_{op}$. Recall that $E_0 = U_Y^\top E V_Y$ with $U_Y$ in $\mathcal{O}_n$ and $V_Y$ in $\mathcal{O}_p$ and therefore $E_0$ and $E$ have the same singular values. Therefore $\|E\|_{op}= \|E_0\|_{op}$. 
The noise matrix $E$ has independent, centered, $\sigma-$sub-Gaussian entries and its spectral norm verifies (see \cite{vershynin}) for some absolute constant $C>0$
\begin{equation} \label{ineq: concentration}
    \mathbb{P} \left( \|E\|_{op}^2 \leq 2C \sigma^2 \cdot (1+t)^2 ({n}+ {p}) \right) \geq 1 - 2 e^{-t^2(\sqrt{n}+ \sqrt{p})^2}, \quad \text{for any } t>0.
\end{equation}
Moreover, $ \E \left[ \|E\|_{op} \right] \leq \sqrt{C} \sigma (\sqrt{n} + \sqrt{p}).$

\subsection{Proof of Corollary~\ref{cor: upper bound original problem}}

Recall the notation $M^*_0 = A_0^* \Sigma_X B_0^*$ and $\hat M_0 = \hat{A_0}_{r} \Sigma_X \hat{B_0}_{r}$ with $\hat{A_0}_r$ and $\hat{B_0}_r$ given by \eqref{estimators_diag_fixed_rank} and let us denote $M^* = A^* X B^*$ and $\hat M = \hat{A}_{r} X \hat{B}_{r}$ with $\hat A_r$ and $\hat B_r$ given by \eqref{estimators_fixed rank}. Notice that the Frobenius norm and the rank are invariant under left or right multiplication by orthogonal matrices. Therefore, we follow the lines of the proof of Theorem~\ref{th: fixed rank bounds diagonal} and see that $\| Y - \hat{M} \|^2_F = \| \Sigma_Y - \hat{M}_{0} \|^2_F$ and $\rank M^* = \rank M_0^* = r^*$. Also, $\hat M$ is the projection $[Y]_r$ of $Y$ on the space of matrices with rank less than or equal to $r$. 
Finally, the equality \eqref{eq: equality inf} can be pushed forward 
$$
    \inf_{M:\rank M \leq r} \|M_0^* - M\|_F^2
    =\underset{\substack{A_0,B_0: \\\rank A_0 \wedge \rank B_0 \leq r }}{\inf} \|M_0^* - A_0 \Sigma_X B_0\|_F^2
    =\underset{\substack{A,B: \\\rank A \wedge \rank B \leq r }}{\inf} \|M^* - A X B\|_F^2.
$$
Indeed, we have one-to-one transformations of $A_0, \, B_0$ into $A,\, B$, respectively, and equality of the Frobenius norms. This finishes the proof.

\subsection{Proof of Theorem~\ref{th: rank adaptive predictor upper bound}}

By definition of $\hat r = \hat r(\lambda)$, we have that, for all $r \in [n \wedge p \wedge r_X]$,
\begin{align*}
    \|Y - \hat A_{\hat r} X \hat B_{\hat r}\|_F^2 + \lambda \hat r &\leq \|Y - \hat A_r X \hat B_r\|_F^2 + \lambda r. 
\end{align*}
Since $\hat A_r X \hat B_r$ is the projection $[Y]_r$ of $Y$ on the space of matrices $M$ with $\rank M\leq r$, we get that for all matrices $A$ and $B$ such that $\rank A \wedge \rank B \leq r$
$$
\|Y - \hat A_r X \hat B_r\|_F^2 \leq \|Y - A X B\|_F^2.
$$
Indeed, $\rank(AXB) \leq r$ and Pythagora's theorem gives the former inequality. We deduce that
\begin{align*}
    \|Y - \hat A_{\hat r} X \hat B_{\hat r}\|_F^2 + \lambda \hat r &\leq \|Y - A X B\|_F^2 + \lambda r. 
\end{align*}
Next, replace $Y=A^* X B^* + E$, expand the squares and rearrange terms to get
\begin{align*}
    \| A^* X B^* - \hat A_{\hat r} X \hat B_{\hat r}\|_F^2  &\leq \|A^* X B^* - A X B \|_F^2 + \lambda (r - \hat r ) \\
    & + 2 \langle E, \hat A_{\hat r} X \hat B_{\hat r} -  A X B \rangle.
\end{align*}
Let us denote by $\hat M(\hat r)  = \hat A_{\hat r} X \hat B_{\hat r}$, $M(r) = A X B$ and see that $\rank(\hat M(\hat r) -  M(r)) \leq \hat r +r$. We have
\begin{align*}
    \langle E , \hat A_{\hat r} X \hat B_{\hat r} - A X B \rangle & \leq \|E \|_{op} \cdot \| \hat M (\hat r) - M(r)\|_* \\
    &\leq \|E \|_{op} \cdot \sqrt{\hat r +r} \| \hat M (\hat r) -  M(r)\|_F \\
    &\leq \|E \|_{op} \cdot \sqrt{\hat r +r} (\| M^*- \hat M (\hat r)\|_F + \|M^* -  M(r) \|_F).
\end{align*}
Then, using twice the inequality $2xy \leq \alpha x^2 + \alpha^{-1}y^2$ with $x, y \geq 0$ and $\alpha > 0$, we obtain for arbitrary real numbers $\alpha>1$, $\beta>0$:
\begin{align*}
    (1 - \alpha^{-1}) \|M^* - \hat M(\hat r) \|_F^2 &\leq (1 + \beta^{-1}) \|M^* -  M(r)\|_F^2 \\
    & + (\alpha +\beta) \|E\|_{op}^2 (r + \hat r) + \lambda(r- \hat r).
\end{align*}
Consequently, if $(\alpha +\beta) \|E\|_{op}^2 \leq \lambda$:
$$
(1 - \alpha^{-1}) \|M^* - \hat M(\hat r) \|_F^2 \leq (1 + \beta^{-1}) \|M^* -  M(r) \|_F^2 + 2 \lambda r,
$$
for all $r$ in $[n \wedge p \wedge r_X]$ and all $M(r) = A X B$ with $\rank A\wedge \rank B \leq r$. We get the result by replacing again $\alpha=3/2$ and $\beta=1/2$. Then we use that 
\begin{align*}
    \underset{\substack{A, B \\ \rank A\wedge \rank B \leq r}}{\min} \| A^* X B^* - A X B \|_F^2 = \sum \limits_{k = r + 1}^{r^*} \sigma_k(A^* X B^*)^2
\end{align*}
and the high-probability bounds in \eqref{ineq: concentration}.

\subsection{Proofs of results in Section~\ref{sec: rank selection}}

\begin{proof}[Proof of Proposition~\ref{prop: hat r 1}]
    For any $r$ in $[n \wedge p \wedge r_X]$, we have that $\hat A_r X \hat B_r = [Y]_r$ is the projection of $Y$ on the space of matrices having rank smaller than or equal to $r$. 
    Now, write
    \begin{align*}
        F(r): & = \|Y - \hat A_r X \hat B_r\|_F^2 + \lambda r\\
       & = \sum_{k=r+1}^{r_Y} \sigma_k(Y)^2 \cdot \mathbf{1}_{r < r_Y} + \lambda r\\
       & = \sum_{k=r+1}^{r_Y} (\sigma_k(Y)^2 - \lambda) \cdot \mathbf{1}_{r < r_Y} + \lambda r_Y.
    \end{align*}
It is easy to see that $F$ as a function of $r$ has a unique minimum at $r_Y(\lambda)$ if $\lambda > \sigma_{r_Y}(Y)^2$, but is minimal and constant for $r = r_Y,\ldots, (n \wedge p\wedge r_X)$ whenever $\lambda \leq \sigma_{r_Y}(Y)^2$.
\end{proof}

\begin{proof}[Proof of Proposition~\ref{prop: hat r 2}]
By definition of $\hat r$, we have $k > \hat r$ if and only if $\lambda > \sigma_k(Y)^2$ and $k < \hat r$ if and only if $\lambda \leq \sigma_{k+1}(Y)^2$.     In our model $Y = A^* X B^* + E$, the Weyl inequality gives $| \sigma_k(A^*XB^*) - \sigma_k(Y)| \leq \sigma_1(E)$ for all $k$. The events on $\hat r$ can be written in terms of $\sigma_1(E) = \|E\|_{op}$ as follows. We have
\begin{align*}
    \{k > \hat r\} \quad &\text{implies} \quad \lambda > (\sigma_k(A^*XB^*) - \sigma_1(E))^2 , \\
    \{k < \hat r\} \quad &\text{implies} \quad \lambda \leq (\sigma_{k+1}(A^* X B^*) + \sigma_1(E))^2 .
\end{align*}
Thus $\{\hat r \not = k\}$ implies either $\sigma_1(E) > \sigma_k(A^* X B^*) - \sqrt{\lambda}$ or $\sigma_1(E) \geq \sqrt{\lambda} - \sigma_{k+1}(A^* X B^*)$. 
Let us take $k = r^*(\lambda)$. Then the assumption  that $\sigma_{r^*(\lambda)}(A^*XB^*) > (1+c)\sqrt{\lambda}$ gives that  $\sigma_1(E) > c \sqrt{\lambda}$ and the assumption that $\sigma_{r^*(\lambda)+1}(A^*XB^*) < (1-c)\sqrt{\lambda}$ gives also that $\sigma_1(E) > c \sqrt{\lambda}$. Thus,
\begin{align*}
    \P \left(\hat r \neq r^*(\lambda) \right) &\leq \P \left( \sigma_1(E) > c \sqrt{\lambda} \right).
\end{align*}
The proof is finished using the inequality \eqref{ineq: concentration}.
\end{proof}

\subsection{Proof of Theorem~\ref{th: rank adaptive predictor upper bound unknown variance} }

The optimization problem \eqref{opt: rank search minimization unknown variance} can be written, after replacing $ \widehat \sigma_r^2$, as follows
$$ \bar r  \in \arg\min_{r \in [r_{max}]} \|Y - \hat A_r X \hat B_r\|_F^2 \left( 1 + \frac{\lambda r}{np} \right) . $$
We denote by $\bar M = \hat A_{\bar r} X \hat B_{\bar r}$, $\hat M_r = \hat A_{r} X \hat B_{r}$ and $M^* = A^* X B^*$.
With this notation it follows that, for $r \leq r_{max}$, 
$$ 
\| Y - \bar M \|_F^2 \left( 1 + \frac{\lambda \bar r}{np} \right) \leq \| Y - \hat{M}_r \|_F^2 \left( 1 + \frac{\lambda  r}{np} \right).
$$
Developing the squares and using the equality $Y = M^* + E$, we get
$$
\| M^* - \bar M \|_F^2 \leq \| M^* - \hat{M}_r \|_F^2 + 2 \langle E, \bar M - \hat M_r \rangle_F + \frac{\lambda  r}{np} \| Y - \hat M_r \|^2_F - \frac{\lambda \bar r}{np} \| Y - \bar M \|_F^2. 
$$ 
We now use the upper bound $\langle E, \bar M - \hat M_r \rangle_F \leq \| E \|_{op} \|\bar M - \hat M_r \|_*$ and the definition of $\bar M$ and $\hat M_r$ to derive
$$
\| M^* - \bar M \|_F^2 \leq \| M^* - \hat{M}_r \|_F^2 + 2 \| E \|_{op} \|\bar M - \hat M_r \|_* +  \frac{\lambda  r}{np} \sum \limits_{k>r} \sigma_k(Y)^2 - \frac{\lambda  \bar r}{np} \sum \limits_{k> \bar r} \sigma_k(Y)^2. 
$$
Let us note that we use $\sigma_k(Y) = 0$ in case $k> r_Y$. We recall that $\|\bar M - \hat M_r \|_* \leq \sqrt{r + \bar r} \cdot \|\bar M - \hat M_r \|_F$ and further obtain
\begin{align*}
    \| M^* - \bar M \|_F^2 &\leq \| M^* - \hat{M}_r \|_F^2 + 2 \| E \|_{op} \sqrt{r + \bar r} \left( \| M^* - \bar M \|_F + \| M^* - \hat M_r \|_F \right)\\
    & + \frac{\lambda r}{np} \sum \limits_{k>r} \sigma_k(Y)^2 -  \frac{\lambda \bar r}{np} \sum \limits_{k> \bar r} \sigma_k(Y)^2.
\end{align*} 
Using twice the inequality $2ab \leq \alpha a^2 + \alpha^{-1}b^2$ for $a, b >0$, with $\alpha >1$ first and with $\beta >0$ second, we get 
\begin{align}\label{eq:123}
   (1 - \alpha^{-1}) \| M^* - \bar M \|_F^2 &\leq (1 + \beta^{-1})\| M^* - \hat{M}_r \|_F^2 + (\alpha + \beta) \| E \|_{op}^2(r + \bar r)  \nonumber \\
    &+ \frac{\lambda r}{np} \sum \limits_{k>r} \sigma_k(Y)^2 -  \frac{\lambda \bar r}{np} \sum \limits_{k> \bar r} \sigma_k(Y)^2.
\end{align}

We now distinguish the two cases: $r \leq \bar r$ and $r > \bar r$. 
In the first case, namely $r \leq \bar r$, we bound from above as follows:
\begin{align*}
\frac{\lambda r}{np} \sum \limits_{k>r} \sigma_k(Y)^2 -  \frac{\lambda \bar r}{np} \sum \limits_{k> \bar r} \sigma_k(Y)^2 &
= \frac{\lambda }{np} \left( r \sum_{k=r+1}^{\bar r} \sigma_k(Y)^2 + (r-\bar r) \sum_{k >\bar r} \sigma_k(Y)^2 \right)\\
&\leq  \frac{\lambda }{np} r (\bar r - r) \sigma_{r+1}(Y)^2\\
&\leq  \frac{2\lambda r}{np} (\bar r - r) (\sigma_{r+1}(M^*)^2 + \|E\|_{op}^2) \\
& \leq \frac{2\lambda r}{np} r_{max} \sigma_{r+1}(M^*)^2  + 
\frac{2\lambda r_{max}}{np} (\bar r - r)  \|E\|_{op}^2 ,
\end{align*}
where we used Weyl inequality $\sigma_{r+1}(Y) \leq \sigma_{r+1}(M^*) + \|E\|_{op}$ leading to $\sigma_{r+1}(Y)^2 \leq 2\|E\|_{op}^2 + 2 \sigma_{r+1}(M^*)^2$. We plug this into \eqref{eq:123} to get 
\begin{align*}
   (1 - \alpha^{-1}) \| M^* - \bar M \|_F^2 &\leq (1 + \beta^{-1})\| M^* - \hat{M}_r \|_F^2 
   + \frac{2 \lambda r_{max}}{np} r \sigma_{r+1}(M^*)^2 \\
   &  + r \| E \|_{op}^2(\alpha + \beta - \frac{2 \lambda r_{max}}{np} ) \\
   &  + \bar r \|E\|_{op}^2 (\alpha + \beta + \frac{2 \lambda r_{max}}{np} ) ,
\end{align*}
for all $r \leq \bar r$ belonging to $ [r_{max}] $. Thus, for $\lambda$ such that $\frac{2 \lambda \cdot (r_{max} \vee r_Y)}{np} = (1+\varepsilon) (\alpha+\beta)$ for some $\varepsilon>0$ we get

\begin{align*}
   (1 - \alpha^{-1}) \| M^* - \bar M \|_F^2 &\leq \min_{r\in [\bar r]}\left\{ (1 + \beta^{-1})\| M^* - \hat{M}_r \|_F^2 
   + (1+\varepsilon)(\alpha + \beta) r \sigma_{r+1}(M^*)^2 \right\} \\
   & + (2+\varepsilon) (\alpha + \beta) r_{max} \|E\|_{op}^2.
\end{align*}

We now focus on the second case, namely $r > \bar r$. We observe that in this case, 
\begin{align*}
\frac{\lambda r}{np} \sum \limits_{k>r} \sigma_k(Y)^2 -  \frac{\lambda \bar r}{np} \sum \limits_{k> \bar r} \sigma_k(Y)^2 &
= \frac{\lambda }{np} \left( (r-\bar r) \sum_{k>r} \sigma_k(Y)^2 -\bar r \sum_{k =  \bar r + 1}^r \sigma_k(Y)^2 \right)\\
&\leq  \frac{\lambda (r-\bar r)}{np}  ( r_Y - r) \sigma_{r+1}(Y)^2\\
&\leq  \frac{2\lambda r}{np} r_Y  \cdot \sigma_{r+1}(M^*)^2 + \frac{2\lambda (r-\bar r)}{np} \cdot ( r_Y\vee r_{max} )  \|E\|_{op}^2 ,
\end{align*}
by a similar reasoning in the previous case. We plug this into \eqref{eq:123} to get
\begin{align*}
   (1 - \alpha^{-1}) \| M^* - \bar M \|_F^2 &\leq (1 + \beta^{-1})\| M^* - \hat{M}_r \|_F^2 
   + \frac{2 \lambda  \cdot  r_{max}\vee r_Y}{np} r \sigma_{r+1}(M^*)^2 \\
   &  + r \| E \|_{op}^2(\alpha + \beta + \frac{2 \lambda \cdot r_{max}\vee r_Y}{np} ) \\
   &  + \bar r \|E\|_{op}^2 (\alpha + \beta - \frac{2 \lambda \cdot r_{max}\vee r_Y}{np} ).
\end{align*}
With the same choice of $\lambda$ such that  $\frac{2 \lambda \cdot r_{max}\vee r_Y}{np} = (1+\varepsilon) (\alpha+\beta)$ for some $\varepsilon>0$ we get also in this case that
\begin{align*}
   (1 - \alpha^{-1}) \| M^* - \bar M \|_F^2 &\leq \min_{\bar r < r \leq r_{max}}\left\{ (1 + \beta^{-1})\| M^* - \hat{M}_r \|_F^2 
   + (1+\varepsilon)(\alpha + \beta) r \sigma_{r+1}(M^*)^2 \right\} \\
   & + (2+\varepsilon) (\alpha + \beta) r_{max} \|E\|_{op}^2.
\end{align*}

Taking $\alpha = 3/2$ and $\beta = 1/2$ and combining both cases leads to the following result
\begin{align*}
    \| M^* - \bar M \|_F^2 \leq \underset{r \in [r_{max}]}{\min} &\left\{9 \| M^* - \hat{M}_r \|_F^2 + 6(1+\epsilon) \cdot r \sigma_{r+1}(M^*)^2  \right\} + 6(2+\epsilon)\cdot r_{max} \| E \|_{op}^2 ,
\end{align*}
where we choose $\lambda$ such that $\lambda \cdot r_{max}\vee r_Y = (1+\varepsilon) np$ for some $\varepsilon>0$. 
We conclude by using the inequality \eqref{ineq: concentration}.

\subsection{Proof of Theorem~\ref{th: nuc norm predictor upper bound}}
We proceed by solving the problem in two steps for solving the optimization problem \eqref{opt: nuclear pen} which can be equivalently written as
$$
\underset{\substack{A,B\\ M = AXB}}{\min}\min_M \|Y-M\|_F^2 + 2 \lambda \cdot \|M\|_*,
$$
for $\lambda >0$. The solution to the problem in $M$ is explicit and it is known to be obtained from $Y$ by soft-thresholding of its eigenvalues: $\bar M = U_Y Diag_{n,p}((\sigma_k(Y) - \lambda)_+) V_Y^\top$, where we used the SVD of $Y$: $U_Y \Sigma_Y V_Y^\top$. Next, we project $\bar M$ on the space of matrices $AXB$ for $A$ and $B$ in Frobenius norm. It is easy to check that our choice of $\bar A, \bar B$ are exact solutions, that is $\bar M = \bar A X \bar B$. 

Similarly to the proof of Theorem~\ref{th: rank adaptive predictor upper bound}, by applying the definition of $\bar M$, expanding the squares and rearranging terms we get for all $M$:
\begin{align*}
    \|\bar M - M^*\|_F^2 & \leq \|M^* - M\|_F^2 + 2 \langle E, \bar M - M \rangle + 2 \lambda (\|M\|_* - \|\bar M\|_*)\\
    & \leq \|M^* - M\|_F^2 + 2 \sqrt{\lambda} (\| \bar M - M \|_* + \|M\|_* - \|\bar M\|_*),
\end{align*}
under the event that $\|E\|_{op}^2 \leq \lambda$. We use the decomposability of the nuclear norm of matrices as in \cite{bunea}, to find $\bar M_1$ and $\bar M_2$ such that $\bar M = \bar M_1 + \bar M_2$, $\|\bar M\|_* = \|\bar M_1\|_* + \|\bar M_2\|_*$ and $\|\bar M-M\|_* = \|\bar M_1-M\|_*+\|\bar M_2\|_*$. Moreover, $\rank(\bar M_1) \leq 2 \rank (M)$. This implies
\begin{align*}
    \|\bar M - M^*\|_F^2 & \leq \|M^* - M\|_F^2 + 4 \sqrt{\lambda} \| \bar M_1 - M \|_*\\
    & \leq \|M^* - M\|_F^2 + 4 \sqrt{\lambda} \sqrt{3 \rank (M)} \cdot \| \bar M_1 - M \|_F\\
    & \leq  \|M^* - M\|_F^2 + 4 \sqrt{\lambda} \sqrt{3 \rank (M)} \cdot (\|\bar M - M^*\|_F +\|M - M^*\|_F).
\end{align*}
We obtain for arbitrary real numbers $\alpha >1$ and $\beta >0$, for all $M$,
$$
(1 - \alpha^{-1} ) \|\bar M - M^*\|_F^2 \leq (1 +\beta^{-1}) \|M^* - M\|_F^2 + 4(\alpha+\beta) \lambda \cdot 6 \rank (M).
$$
For the particular values $\alpha = 3/2$ and $\beta = 1/2$, we get
\begin{align*}
     \|\bar M - M^*\|_F^2 &\leq  \min_M \left\{ 9 
\|M^* - M\|_F^2 + 144 \lambda \cdot \rank(M) \right\}\\
& \leq 9 \min_{r\in [n\wedge p \wedge r_X]} \left\{\min_{M: \rank M = r} \|M^* - M\|_F^2 + 16 \lambda \cdot r
\right\}.
\end{align*}
Recall that $\min_{M: \rank M = r} \|M^* - M\|_F^2 = \sum_{K=r+1}^{r^*}\sigma_K(M^*)^2 \cdot \mathbf{1}_{r < r^*} $ to get the final result.

\newpage
\section{Auxiliary results} \label{sec:algorithm}
\begin{algorithm}
\caption{Data-driven procedure for selecting $\bar{r}$ and $\lambda$}\label{alg:r_selection}
\textbf{Input:} data X, Y\\
\textbf{Require:} $np \geq (m\wedge q) r_X > 0$  \\
\textbf{Define:} $\widehat{\sigma}^2_r := \dfrac{\|Y - \hat{A}_r X \hat{B}_r \|_F^2}{np - (m\wedge q) r_X}$ \\
\textbf{Define:} $\lambda(\sigma) := 4 ({n} + {p}) \sigma^2$ \\
\textbf{Define:} $\hat{r}_\lambda := \argmin_{r \in [n \wedge p \wedge r_X]} \left( \|Y - \hat A_r X \hat B_r\|_F^2 + \lambda \cdot r \right) $ \\
\textbf{Initialize:} $r \gets r_X \wedge n \wedge p$, $\bar r \gets \hat{r}_{\lambda(\widehat{\sigma}^2_{r})}$
\begin{algorithmic}
\While{$\bar r < r$}
    \State $r \gets \bar r$
    \State $\bar r \gets \hat{r}_{\lambda(\widehat{\sigma}^2_{r})}$
\EndWhile \\
\textbf{Output:} $\bar r, \lambda(\widehat{\sigma}^2_{\bar r})$
\end{algorithmic}
\end{algorithm}

\bigskip

\noindent {\bf Acknowledgment. }{The authors thank the French National Research Agency (ANR) under the grant Labex Ecodec (ANR-11-LABEX-0047).}

\bibliographystyle{plain}
\bibliography{ref}

\end{document}